\theoremstyle{plain}                    
\newtheorem{thm}{Theorem}[section]
\newtheorem{lem}[thm]{Lemma}
\newtheorem{prop}[thm]{Proposition}
\newtheorem{cor}[thm]{Corollary}
\theoremstyle{definition}
\newtheorem{defn}[thm]{Definition}
\theoremstyle{remark}
\newtheorem{rmk}[thm]{Remark}
\numberwithin{equation}{section}
\newcommand{\R}{\mathbb{R}}
\newcommand{\C}{\mathbb{C}}
\newcommand{\hyp}{\mathbb{H}}
\newcommand{\cp}{\mathbb{C}\mathbb{P}^1}
\newcommand{\pslc}{\mathrm{PSL}_2\C}
\newcommand{\pslr}{\mathrm{PSL}_2\R}
\newcommand{\dev}{\textsf{dev}}
\lbrace\begin{array}{@{}l@{}}}%
\def\qr#1#2{%
      \raise1ex\hbox{$#1$}\Big/ \lower1ex\hbox{$#2$}%
}
\def\qrr#1#2{%
      \raise1ex\hbox{$#1$}\Big/\Big/ \lower1ex\hbox{$#2$}%
}
\def\ql#1#2{%
      \lower1ex\hbox{$#1$}\Big\backslash \raise1ex\hbox{$#2$}%
}
\lbrace\begin{array}{@{}l@{}}}%
\begin{document}
\title[Distances on the Moduli space of $\cp-$Structures]{DISTANCES ON THE MODULI SPACE OF COMPLEX PROJECTIVE STRUCTURES}
\author{GIANLUCA FARACO}
\address{School of Mathematics - Tata Institute of Fundamental Research,	Homi Bhabha Road, Mumbai 400005, India}
\curraddr{}
\email{faraco@math.tifr.res.in}
\email{gianluca.faraco.math@gmail.com}

\thanks{}

\date{December 2018}
\subjclass[2010]{30F60, 57M50, 32Q45}

\begin{abstract}
Let $S$ be a closed and oriented surface of genus $g$ at least $2$. In this (mostly expository) article, the object of study is the space $\mathcal{P}(S)$ of marked isomorphism classes of projective structures on $S$. We show that $\mathcal{P}(S)$, endowed with the canonical complex structure, carries
exotic hermitian structures that extend the classical ones on the Teichm\"uller space $\mathcal{T}(S)$ of $S$. We shall notice also that the Kobayashi and Carath\'eodory pseudodistances, which can be defined for any complex manifold, can not be upgraded to a distance. We finally show that $\mathcal{P}(S)$ does not carry any Bergman pseudometric.
\end{abstract}

\maketitle
\tableofcontents

\section{Introduction}  

\subsection{About the problem} Teichm\"uller theory is one of those topics which were intensively studied in the last century. For a closed surface $S$, the Teichm\"uller space $\mathcal{T}(S)$ of $S$ is defined as the moduli space of deformations of complex structures defined on $S$. If $S$ has negative Euler characteristic, then the Teichm\"uller space carries a natural complex structure that makes it a complex manifold of dimension $3g-3$, where $g$ denotes the genus of $S$. The complex structure on $\mathcal{T}(S)$ can be defined in different ways and here we will consider the one introduced by Bers (the curious reader can consult \cite{SN} for another proof). More precisely, in the series of papers \cite{BE3,BE, BE2}, Bers 
defined an embedding, currently known as Bers' embedding, that realises the Teichm\"uller space as a bounded pseudoconvex domain inside the complex vector space $\C^{3g-3}$. Some results by Royden \cite{RO}, Oka \cite{O9,O4} and Kobayashi \cite{SK} combined togheter imply that the Teichm\"uller space with its natural complex structure is more than a complex manifold, indeed it is a Stein manifold. At the same time, the Teichm\"uller space carries different metrics, namely: the Teichm\"uller metric, the Bergman metric, the Weil-Petersson metric, the K\"ahler-Einstein metric, the McMullen metric (which is K\"ahler-hyperbolic), the Kobayashi metric and the Carath\'eodory metric. Each one arises from a particular viewpoint on the study of such space. With the only exception of the Teichm\"uller metric, we will summarise briefly these metrics in sections \ref{kcm}, \ref{km} and \ref{bpd}.\\
Upgrading a complex structure on $S$ to a complex projective structure by introducing a projective atlas makes it a rigider object but richer from the geometric viewpoint. For a closed surface $S$ of genus at least $2$, the moduli space of complex projective structures $\mathcal{P}(S)$ is defined in the same fashion of the Teichm\"uller space, namely as the space of deformations of projective structures on $S$. Any complex projective structure on $S$ induces an underlying complex structure: Indeed the major interests for this type of structures arise from the study of linear ODEs (see \cite{GKM} for instance) as well as classical uniformization theory (see \cite{GU} for instance). This fact leads to define a natural and continuous forgetful map that associates any projective structure its underlying complex structure. It can be shown that the forgetful map is actually a fibration over the Teichm\"uller space. For any given surface of genus $g\ge2$, the moduli space of complex projective structures on $S$ carries a natural complex structure that makes the forgetful map a holomorphic fibration.\\
\noindent In this work we investigate which metrics are naturally carried by $\mathcal{P}(S)$ endowed with its natural complex structure. We shall show the existence of exotic metrics that extend the classical ones carried by the Teichm\"uller space $\mathcal{T}(S)$ of $S$. More precisely: Denoting by $h_\bullet$ one amoung the Weil-Petersson, Bergman, K\"ahler-Einstein and McMullen metric, we shall prove the following result.\\

\noindent \textbf{Theorem \ref{mt}:} \emph{Let $S$ be a closed surface of genus $g\ge2$, and let $\mathcal{P}(S)$ be the moduli space of complex projective structure on $S$ endowed with the natural complex structure. Then there exists a hermitian metric on $\mathcal{P}(S)$ that extends the metric $h_\bullet$ on $\mathcal{T}(S)$. In particular this metrics turns out K\"ahler complete unless $h_\bullet$ is the Weil-Petersson metric.}\\

\noindent It would be interesting to understand if the moduli space $\mathcal{P}(S)$ carries a K\"ahler-Einstein metric such that its restriction on the Teichm\"uller space coincides with the K\"ahler-Einstein metric on $\mathcal{T}(S)$. Similarly, we may wonder if there exists a K\"ahler-hyperbolic metric that extends the McMullen metric on $\mathcal{T}(S)$. We shall discuss these problems in section \ref{dist}, however we anticipate that these questions are essentially open.\\
\noindent The moduli space $\mathcal{P}(S)$ endowed with its complex structure carries also the Kobayashi and Carath\'eodory pseudodistances which are classically defined on any complex manifold. Despite they are honest metrics on $\mathcal{T}(S)$ (endowed with the canonical complex structure), in the case of $\mathcal{P}(S)$ we have the following surprising result.\\

\noindent \textbf{Theorem \ref{kcnotcom}:} \emph{Both Kobayashi and Carath\'eodory pseudodistances on $\mathcal{P}(S)$ can not be upgraded to a distance.}\\

\noindent Finally, since the Teichm\"uller space is a Stein manifold, the moduli space $\mathcal{P}(S)$ can be realised also as an unbounded domain inside $\C^{6g-6}$. Hence it might carry a Bergman pseudometric which can be defined if the Hilbert space of square integrable holomorphic functions on that domain is big enough. In the case of $\mathcal{P}(S)$, we shall prove here the following result.\\

\noindent \textbf{Theorem \ref{bnotcom}:} \emph{The moduli space $\mathcal{P}(S)$ does not carry a Bergman metric.}\\

\noindent The paper is organised as follows. In section \ref{cs} we introduce the notions on complex and Stein manifolds we need along the paper. In section \ref{teich} we introduce the Teichm\"uller space for a given closed surface $S$ of genus at least $2$. The first subsections concern about the complex structure of $\mathcal{T}(S)$ whereas in the last subsections we summarise those metrics which are carried by $\mathcal{T}(S)$ endowed with its natural complex structure. Section \ref{cps} starts with the definition of complex projective structure on a given surface and the definition of $\mathcal{P}(S)$. After these definitions, we then turn the attention on the natural complex structure on this moduli space and we emphazises its relationship with the  Teichm\"uller space. Finally, section \ref{dist} we state and prove the main result of this work adding some comments about those questions that do not find an answer in this work.

\subsection*{Acknowlegments} The author wish to thank his Ph.D. advisor Stefano Francaviglia for introducing him to the topic of complex projective structure and his inner Ph.D. tutor Alberto Saracco for introducing him on Stein manifolds and metrics on complex spaces. The author also would like to 
thank Misha Kapovich for a useful remark on MathOverflow and Nicoletta Tardini for a useful conversation.\\

\section{Complex geometry and Stein manifolds} \label{cs}
\noindent In this section we give the main definitions and preliminaries on complex manifolds that we will use in the sequel. 

\begin{defn} Let $M$ be a topological manifold of real dimension $2n$. A \emph{complex structure} on $M$ is defined as the datum of a maximal complex atlas $\mathcal{A}=\{(U_\alpha,\varphi_\alpha)\}_{\alpha\in \Lambda}$ where $\{U_\alpha\}$ is an open cover of $M$ and any chart $\varphi_\alpha$ is a homeomorphism into its image $\varphi_\alpha(U_\alpha)\subset \C^n$ such that transition functions turn out to be biholomorphisms. 
\end{defn}

\noindent Since any biholomorphism is in particular a diffeomorphism, it turns out that any complex atlas defined on $M$ determines an underlying differentiable structure making $M$ a smooth manifold of dimension $2n$. Upgrading a topological manifold $M$ to a complex manifold by introducing a complex atlas, turns it into a more rigid object from many viewpoints and the sets of compatible functions are much smaller than their topological counterparts.

\begin{defn} Let $M$ be a complex manifold. A complex-valued function $f$ is said to be \emph{holomorphic} if the function $f\circ\varphi_\alpha^{-1}:\varphi_\alpha(U)\longrightarrow \C$ is holomorphic for any complex chart. We denote by $\mathcal{O}(M)$ the Fr\'echet algebra of all holomorphic functions on $M$ with the compact-open topology.
\end{defn}

\noindent For our purposes, we shall not need to recall other definition about complex manifolds, we then turn our attention to those complex manifolds known in literature as Stein manifolds. The curious reader can consult, for instance, the introductory book \cite{DH}.

\subsection{Domains of holomorphy} Before going to introduce Stein manifolds, we shall need to consider first the notion of \emph{domain of holomorphy} and its characterizations. Let us start with the following definition.

\begin{defn}\label{domhol}
Let $\Omega$ be an open subset of $\C^n$. Then $\Omega$ is called \emph{domain of holomorphy} if there exists a holomorphic function $f$ on $\Omega$ that is not holomorphically extendable to a larger domain. 
\end{defn}

\noindent In $1-$dimensional case, it is classical that for any open set $\Omega \subset \C$ there is a holomorphic function which is not holomorphically extendable over any boundary point of $\Omega$, this is known as analytic continuation property. This property is very specific to the dimension $1$ which is no longer true in dimension $n\ge2$. Indeed, Hartogs was the first to notice the existence of domains $\Omega$ in $\C^n$ (with $n\ge2$) on which every holomorphic function defined on $\Omega$ can be holomorphically extended to a larger domain in which $\Omega$ is contained (see \cite{HA}). Hartogs' discovery led to the search of the \emph{natural} domains of holomorphic functions, that is domains which are maximal in the sense that of definition \ref{domhol}. As we recall below, an important characterization of them was given by Cartan-Thullen in \cite{CT}. Let $M$ be a complex manifold. To any compact set $K$ of $M$ we can associate its $\mathcal{O}(M)$-\emph{hull} of $K$ which is defined as
\[ \widehat{K}_{\mathcal{O}(M)}=\Big\{ p \in M \text{ }\Big\vert\text{ } |f(p)|\leq\underset{x\in K}{\text{max}}|f(x)|, \text{ } \forall f \in\mathcal{O}(M)\Big\}.
\]

\begin{defn}
A compact set $K$ in a complex manifold $M$ is $\mathcal{O}(M)$-convex if $K= \widehat{K}_{\mathcal{O}(M)}$. A complex manifold $M$ is called \emph{holomorphically convex} if for every compact set $K$ in $M$ its $\mathcal{O}(M)$-\emph{hull} $\widehat{K}_{\mathcal{O}(M)}$ is also compact.
\end{defn}

\noindent The following theorem gives the characterization we have mentioned above. We refer to the excellent book \cite{FF} for the proof.

\begin{thm}[Cartan-Thullen \cite{CT}]\label{ct}
Let $\Omega$ be a domain in $\C^n$. Then $\Omega$ is a domain of holomorphy if and only if it is holomorphically convex.
\end{thm}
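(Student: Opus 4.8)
The plan is to establish the two implications separately, organising everything around the boundary distance function $d_\Omega(z)=\operatorname{dist}(z,\C^n\setminus\Omega)$ and, for a compact $K\subset\Omega$, the quantity $d_\Omega(K)=\inf_{z\in K}d_\Omega(z)>0$. The technical heart of the argument is a single Cauchy-type estimate, which I would isolate as a lemma: \emph{for every $f\in\mathcal{O}(\Omega)$ and every compact $K\subset\Omega$, the Taylor series of $f$ about any point $p\in\widehat{K}_{\mathcal{O}(\Omega)}$ converges on the polydisc $P(p,d_\Omega(K))$ centred at $p$}. To prove this, observe that for each multi-index $\alpha$ the normalised derivative $\partial^\alpha f/\alpha!$ is again holomorphic on $\Omega$, so by the very definition of the hull $|\partial^\alpha f(p)/\alpha!|\le \sup_{K}|\partial^\alpha f/\alpha!|$; estimating the right-hand side by the usual Cauchy inequalities on polydiscs of radius slightly less than $d_\Omega(K)$ about the points of $K$ yields coefficient bounds of the form $|\partial^\alpha f(p)/\alpha!|\le M r^{-|\alpha|}$ for every $r<d_\Omega(K)$, whence the claimed radius of convergence.

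For the implication \emph{domain of holomorphy $\Rightarrow$ holomorphically convex} I would invoke this estimate directly. Fix a compact $K\subset\Omega$. The coordinate functions $z_1,\dots,z_n$ are holomorphic, so $\widehat{K}_{\mathcal{O}(\Omega)}$ is bounded; it is also closed in $\Omega$, being an intersection of closed sets. It therefore suffices to show that $\widehat{K}_{\mathcal{O}(\Omega)}$ stays a definite distance from $\partial\Omega$. If some $p\in\widehat{K}_{\mathcal{O}(\Omega)}$ had $d_\Omega(p)<d_\Omega(K)$, then by the lemma every $f\in\mathcal{O}(\Omega)$ would extend holomorphically to the polydisc $P(p,d_\Omega(K))$, which protrudes out of $\Omega$; matching these local extensions with the original functions on the component of $P(p,d_\Omega(K))\cap\Omega$ containing $p$ would produce a holomorphic continuation of \emph{every} element of $\mathcal{O}(\Omega)$ across a boundary point, contradicting the assumption that $\Omega$ carries a function with no such continuation. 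Hence $d_\Omega(\widehat{K}_{\mathcal{O}(\Omega)})\ge d_\Omega(K)>0$, and a closed, bounded subset of $\C^n$ bounded away from $\partial\Omega$ is compact.

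For the converse, \emph{holomorphically convex $\Rightarrow$ domain of holomorphy}, I would construct an explicit function with no continuation. Choose a normal exhaustion $K_1\subset K_2\subset\cdots$ of $\Omega$ by compact sets with $K_j\subset\operatorname{int}K_{j+1}$ and $\bigcup_j K_j=\Omega$; using holomorphic convexity I may assume each $K_j$ is $\mathcal{O}(\Omega)$-convex, replacing it by its hull, which is compact by hypothesis. Next I would select points $a_j\in K_{j+1}\setminus K_j$ accumulating at every boundary point of $\Omega$, which is possible because $\partial\Omega$ is separable and the shells $K_{j+1}\setminus K_j$ exhaust a neighbourhood of the boundary. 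Since $a_j\notin\widehat{K_j}_{\mathcal{O}(\Omega)}=K_j$, after normalisation there is $f_j\in\mathcal{O}(\Omega)$ with $|f_j(a_j)|>1>\sup_{K_j}|f_j|$. Taking suitably high powers and scalar multiples $g_j=c_j f_j^{N_j}$, arranged recursively so that $\sup_{K_j}|g_j|\le 2^{-j}$ while $|g_j(a_j)|$ dominates $j+\sum_{i<j}|g_i(a_j)|$, the series $f=\sum_j g_j$ converges uniformly on every $K_m$ and so defines $f\in\mathcal{O}(\Omega)$; the construction forces $|f(a_j)|\to\infty$. As $\{a_j\}$ clusters at every point of $\partial\Omega$, the function $f$ is unbounded near each boundary point and therefore admits no holomorphic extension across any of them, exhibiting $\Omega$ as a domain of holomorphy.

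The main obstacle is the bookkeeping in this last construction: one must simultaneously keep the tail $\sum_{i>j}g_i(a_j)$ bounded — which is precisely why the points are placed in the shells $K_{j+1}\setminus K_j$, so that $a_j\in K_i$ for all $i>j$ and hence $|g_i(a_j)|\le 2^{-i}$ there — and guarantee that the chosen sequence genuinely accumulates at \emph{every} boundary point, so that the blow-up obstructs continuation everywhere rather than at a single place. The Cauchy estimate underlying the first implication, by contrast, is the clean conceptual core; the only delicate point there is the standard verification that the local Taylor extensions glue to a genuine single-valued continuation, which is where the mild imprecision in the definition of a domain of holomorphy has to be handled with care.
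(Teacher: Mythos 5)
The paper does not actually prove Theorem \ref{ct}; it cites \cite{FF} for the proof, so there is nothing internal to compare against. Your argument is the standard Cartan--Thullen proof and is essentially sound. The implication ``domain of holomorphy $\Rightarrow$ holomorphically convex'' via the Cauchy--Taylor estimate $|\partial^\alpha f(p)/\alpha!|\le \sup_K|\partial^\alpha f/\alpha!|\le M r^{-|\alpha|}$ for $p\in\widehat{K}_{\mathcal{O}(\Omega)}$ is exactly right, and you correctly identify the one genuinely delicate point: the Taylor extension to $P(p,d_\Omega(K))$ agrees with $f$ only on the connected component of $P(p,d_\Omega(K))\cap\Omega$ containing $p$, so with the paper's naive Definition \ref{domhol} (``not extendable to a larger domain'') one must either reformulate the definition in terms of pairs $\Omega_1\subset\Omega\cap\Omega_2$ or argue around the possible multivaluedness; flagging this is appropriate rather than a defect. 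The converse construction is also the standard one, and your observation that $a_j\in K_i$ for $i>j$ controls the tail $\sum_{i>j}|g_i(a_j)|\le\sum_{i>j}2^{-i}$ is the key bookkeeping step, correctly handled.

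Two small points deserve more care. First, placing exactly one point $a_j$ in each shell $K_{j+1}\setminus K_j$ while making the whole sequence accumulate at \emph{every} boundary point needs an explicit diagonal argument over a countable dense subset of $\partial\Omega$ (choosing, for each pair $(m,l)$, a point within $1/l$ of the $m$-th boundary point lying in a shell of index beyond all previously used ones, and re-indexing); as written this is asserted rather than arranged. Second, unboundedness of $f$ along $\{a_j\}$ rules out extension to a larger domain $\Omega'\supsetneq\Omega$ because such an $\Omega'$ necessarily contains a point of $\partial\Omega$ at which $F$ would be locally bounded --- this suffices for the paper's definition, but for the sharper notion of domain of holomorphy one usually strengthens the construction so that the $a_j$ enter every maximal polydisc about a countable dense set of \emph{interior} points, forcing the radius of convergence of the Taylor series of $f$ at each $\zeta\in\Omega$ to equal $d_\Omega(\zeta)$. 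Neither issue is a conceptual gap; both are standard refinements of the argument you outline.
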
 

\noindent It is natural to ask which geometric properties characterize domains of holomorphy. It happens that for any domain of holomorphy $\Omega$ in $\C^n$, every continuous mapping $f:\overline{\Bbb D}\times [0,1]\longrightarrow \C^n$ such that
\begin{enumerate}
\item for each $t\in[0,1]$ the mapping $f_t:\Bbb D\longrightarrow \C^n$ defined by $f_t(z)=f(z,t)$ is holomorphic, and
\item $f(z,t)\in \Omega$ unless $|z|<1$ and $t=1$, 
\end{enumerate}
maps $\overline{\Bbb D}\times [0,1]$ into $\Omega$, where $\Bbb D$ denotes the unit disc in $\C$. This geometric feature is known as \emph{pseudoconvexity} of $\Omega$. In \cite{O9, O4}, Oka showed that this property characterize domains of holomorphy completely. We have the following characterization theorem.

\begin{thm}[Oka \cite{O9,O4}]\label{ot}
Let $\Omega$ be a domain in $\C^n$. Then $\Omega$ is a domain of holomorphy if and only if it is pseudoconvex.
\end{thm}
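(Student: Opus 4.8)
The plan is to prove the two implications separately: given Theorem~\ref{ct} the forward one is elementary, whereas the reverse is the classical \emph{Levi problem} and carries all the difficulty.

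\emph{Domain of holomorphy $\Rightarrow$ pseudoconvex.} Assume $\Omega$ is a domain of holomorphy, so by Theorem~\ref{ct} it is holomorphically convex. Let $f\colon\overline{\mathbb{D}}\times[0,1]\to\C^n$ be a continuous family of analytic discs satisfying (1) and (2), and put $K=f(\partial\mathbb{D}\times[0,1])$, which is a compact subset of $\Omega$ because the bounding circles always land in $\Omega$. For every $g\in\mathcal{O}(\Omega)$ and every $t<1$ the whole disc $f_t(\overline{\mathbb{D}})$ lies in $\Omega$, so $g\circ f_t$ is holomorphic on $\mathbb{D}$ and continuous up to the boundary; the maximum modulus principle then gives $\sup_{\overline{\mathbb{D}}}|g\circ f_t|\le\sup_K|g|$, whence $f_t(\overline{\mathbb{D}})\subset\widehat{K}_{\mathcal{O}(\Omega)}$ for all $t<1$. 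Holomorphic convexity makes $\widehat{K}_{\mathcal{O}(\Omega)}$ compact, hence a closed subset of $\C^n$ contained in $\Omega$; letting $t\to1$ and invoking the continuity of $f$ forces $f_1(\overline{\mathbb{D}})\subset\widehat{K}_{\mathcal{O}(\Omega)}\subset\Omega$. This is precisely the continuity principle, so $\Omega$ is pseudoconvex.

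\emph{Pseudoconvex $\Rightarrow$ domain of holomorphy.} I would first recast the continuity principle into an analytic form amenable to construction. A standard chain of equivalences shows that the stated pseudoconvexity is the same as plurisubharmonicity of $-\log d(\,\cdot\,,\partial\Omega)$, with $d$ the Euclidean distance to $\partial\Omega$, and hence as the existence of a smooth, strictly plurisubharmonic exhaustion $\varphi\colon\Omega\to\R$ (add $|z|^2$ to make it strictly plurisubharmonic and regularise by convolution). By Theorem~\ref{ct} it now suffices to show that such an $\Omega$ is holomorphically convex, and more concretely to produce, for each boundary point, a holomorphic function on $\Omega$ that does not extend across it.

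The heart of the matter, and the main obstacle, is the solution of the Levi problem on the sublevel sets $\Omega_c=\{\varphi<c\}$. My plan is to follow Hörmander's $L^2$ method: using $\varphi$ as a strictly plurisubharmonic weight, establish the fundamental a priori inequality (the Kohn–Morrey–Hörmander estimate) bounding a $(0,1)$-form in terms of its $\bar\partial$ and $\bar\partial^{\ast}$ images in the weighted $L^2$ inner product, and deduce solvability of $\bar\partial u=v$ with $L^2$ bounds for every $\bar\partial$-closed $(0,1)$-form $v$. Given a boundary point $p$ together with a locally defined holomorphic function singular at $p$, one cuts it off, measures the resulting failure of holomorphy by a $\bar\partial$-closed form supported near $p$, and solves it away against a weight strongly concentrated at $p$ so that the correction vanishes there; the difference is then a genuine holomorphic function on $\Omega$ that blows up at $p$. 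Exhausting $\Omega$ by the $\Omega_c$ and passing to the limit via the Oka–Weil approximation theorem yields holomorphic convexity, while the singular functions so constructed exhibit $\Omega$ as a domain of holomorphy. The delicate points are the regularisation that produces a genuinely \emph{strictly} plurisubharmonic smooth exhaustion and the uniform control of the solution operator as $c\to\infty$; this is exactly where the original arguments of Oka \cite{O9,O4}, via bumping and analytic polyhedra, and the later $L^2$ techniques do the real work.
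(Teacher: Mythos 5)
The paper does not prove this theorem at all: it is stated as a classical result of Oka, with the proof deferred to \cite{O9,O4} and the textbook references \cite{FF}, \cite{SK}. So there is no in-paper argument to compare against, and the question is only whether your proposal would stand on its own.

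Your forward implication (domain of holomorphy $\Rightarrow$ pseudoconvex) is correct and essentially complete. Via Theorem \ref{ct} you reduce to holomorphic convexity, and the maximum-modulus argument placing each disc $f_t(\overline{\mathbb{D}})$, $t<1$, inside $\widehat{K}_{\mathcal{O}(\Omega)}$ for $K=f(\partial\mathbb{D}\times[0,1])$ is exactly the standard proof of the continuity principle; the passage to $t=1$ works because the hull is compact, hence closed in $\C^n$ and contained in $\Omega$. This matches the paper's (Oka's) definition of pseudoconvexity verbatim, so no translation between notions of pseudoconvexity is needed here.

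The reverse implication is where the entire content of the theorem lives, and as written your proposal is a program rather than a proof. You correctly identify the chain: Oka pseudoconvexity $\Leftrightarrow$ plurisubharmonicity of $-\log d(\cdot,\partial\Omega)$ $\Leftrightarrow$ existence of a smooth strictly plurisubharmonic exhaustion, and then the solution of the Levi problem via the Kohn--Morrey--H\"ormander estimate and $\bar\partial$-solvability with weights, followed by Oka--Weil approximation on the sublevel sets $\Omega_c$. That is the correct modern route (H\"ormander's), and it is a legitimate alternative to Oka's original bumping and analytic-polyhedra argument. But none of the genuinely hard steps is carried out: the equivalence of the continuity principle with plurisubharmonicity of $-\log d$ is itself a nontrivial lemma (Hartogs radii / Kontinuit\"atssatz applied to families of discs), the a priori inequality and the density/closed-range arguments needed to solve $\bar\partial u=v$ in $L^2_{\varphi}$ require real work, and the passage from $\bar\partial$-solvability to non-extendable holomorphic functions needs the Oka--Weil exhaustion argument in full. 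You flag all of this honestly, so the roadmap is sound, but as a proof of the statement there is a genuine gap: the reverse direction is asserted modulo the solution of the Levi problem rather than established. For the purposes of this expository paper, citing \cite{O9,O4} or \cite{FF} for that direction, as the author does, is the appropriate resolution.
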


\begin{rmk}
Our definition of pseudoconvexity was given by Oka in \cite{O4}. There are other definitions of pseudoconvexity known in literature as \emph{Hartogs pseudoconvexity} or \emph{Levi pseudoconvexity} for domains with $\mathcal{C}^2-$boundaries. As it is natural to expect, these definitions turn out equivalent. We refer to \cite{FF} and \cite{SK} and references therein for further details.
\end{rmk}

\subsection{Stein manifolds and the Oka-Grauert Principle} In this section we are going to consider a very special class of complex manifolds, namely Stein manifolds. The original definition of this class of manifolds was introduced by Stein in \cite{KS} by a system three axioms which postulate the existence of global holomorphic functions making an analogy with the properties of domains of holomorphy. Here we give the following modern definition.

\begin{defn}
A complex manifold $M$ is said to be \emph{Stein manifold} (or holomorphically complete manifold) if the following hold:
\begin{enumerate}
\item for every couple of distinct points $x\neq y$ in $M$ there is a holomorphic function $f\in\mathcal{O}(M)$ such that $f(x)\neq f(y)$;
\item $M$ is holomorphically convex.
\end{enumerate}
\end{defn}

\noindent Some remarks and considerations.

\SetLabelAlign{center}{\null\hfill\textbf{#1}\hfill\null}
\begin{enumerate}[leftmargin=1.75em, labelwidth=1.3em, align=center, itemsep=\parskip]
\item[\bf 1.] In \cite{RR}, Remmert gave the following characterization: A complex manifold $M$ is Stein if and only if it is biholomorphic to a closed complex submanifold of a Euclidean space $\C^N$ for some natural $N$. It follows that Stein manifolds are holomorphic analogues of affine algebraic manifolds.
\item[\bf 2.] From the Cartan-Thullen Theorem above \ref{ct}, it follows that an open set in $\C^n$ is Stein if and only if it is a domain of holomorphy.
\item[\bf 3.] Compact manifolds are never Stein. Infact, since any holomorphic function defined on a compact complex manifold is constant, it follows that there is no function that separetes any given pair of points.
\item[\bf 4.] If $\pi:E\longrightarrow M$ is a holomorphic vector bundle over a Stein base $M$, then the total space is Stein. However this miserably fails if $E$ is a fiber bundle over $M$, see \cite[Section 4.21]{FF} for further details.
\end{enumerate}

\noindent We finally turn our attention on vector bundles having a Stein base. The Oka - Grauert Principle asserts that every topological complex vector bundle over a Stein manifold admits an equivalent holomorphic vector bundle structure. In addition we have the following theorem.

\begin{thm}\label{topeqholo}
Let $M$ be a Stein manifold. Two holomorphic vector bundles over $M$ are holomorphically equivalent if and only if they are topologically equivalent. 
\end{thm}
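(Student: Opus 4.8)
The plan is to prove the two implications separately, the first being immediate and the second carrying all the content (it is Grauert's version of the Oka principle). For the trivial direction, suppose $E_0$ and $E_1$ are holomorphically equivalent holomorphic vector bundles over $M$: a holomorphic bundle isomorphism $E_0\to E_1$ is in particular a homeomorphism compatible with the projections and linear on the fibres, hence a topological equivalence. Thus the only thing to establish is the converse, namely that two holomorphic bundles which are topologically equivalent are already holomorphically equivalent.

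First I would encode isomorphism classes of bundles cohomologically. Fixing the rank $r$, an open cover $\mathcal{U}=\{U_\alpha\}$ of $M$ together with holomorphic (respectively continuous) transition functions $g_{\alpha\beta}\colon U_\alpha\cap U_\beta\to \mathrm{GL}_r(\C)$ satisfying the cocycle condition determines a class in the non-abelian \v{C}ech cohomology set $H^1(M,\mathrm{GL}_r(\mathcal{O}))$ (respectively $H^1(M,\mathrm{GL}_r(\mathcal{C}))$), where $\mathrm{GL}_r(\mathcal{O})$ and $\mathrm{GL}_r(\mathcal{C})$ denote the sheaves of germs of holomorphic and continuous $\mathrm{GL}_r(\C)$-valued functions on $M$. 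Two bundles are holomorphically (respectively topologically) equivalent precisely when their cocycles are cohomologous in the holomorphic (respectively continuous) category, so these two cohomology sets are exactly the sets of holomorphic and topological isomorphism classes of rank-$r$ bundles.

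The sheaf inclusion $\mathrm{GL}_r(\mathcal{O})\hookrightarrow \mathrm{GL}_r(\mathcal{C})$ induces a natural map of pointed sets
\[
\phi\colon H^1(M,\mathrm{GL}_r(\mathcal{O}))\longrightarrow H^1(M,\mathrm{GL}_r(\mathcal{C})),
\]
which simply sends the holomorphic isomorphism class of a bundle to its topological isomorphism class. With this reformulation the statement to be proved is exactly the \emph{injectivity} of $\phi$: if two holomorphic bundles have the same image under $\phi$ --- that is, are topologically equivalent --- then they already carry the same class in the source --- that is, are holomorphically equivalent. Note that the Oka--Grauert principle quoted above asserts the \emph{surjectivity} of $\phi$ (every topological bundle comes from a holomorphic one), while Grauert's theorem in fact gives that $\phi$ is a \emph{bijection} whenever $M$ is Stein; injectivity is the part we need here.

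Finally I would invoke Grauert's theorem to conclude, and this is precisely where the main obstacle lies, since the required input is far from formal. Its proof over a Stein manifold rests on the Oka--Cartan machinery --- Cartan's Theorems A and B and the consequent solvability of the relevant $\bar\partial$- and Cousin-type problems --- together with Grauert's approximation and homotopy arguments, which upgrade a continuous equivalence to a holomorphic one by successively correcting it over an exhaustion of $M$ by holomorphically convex compacta. I would take this deep result as a black box, referring to \cite{FF} for the complete proof, whence the theorem follows at once from the injectivity of $\phi$.
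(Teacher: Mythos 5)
Your argument is correct and follows essentially the same route as the paper, which gives no proof of its own but defers entirely to Grauert's Oka principle via \cite{FF}; your reduction (trivial direction, non-abelian \v{C}ech reformulation, injectivity of $\phi\colon H^1(M,\mathrm{GL}_r(\mathcal{O}))\to H^1(M,\mathrm{GL}_r(\mathcal{C}))$ from Grauert's bijectivity theorem) is exactly the standard framework, and it is the rank-$r$ generalization of the cohomological remark the paper itself makes for $k=1$. The only caveat is that the entire content still sits in the black box you invoke, just as it does in the paper's citation, so nothing is lost or gained relative to the source.
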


\begin{cor}
Let $M$ be a simply connected Stein manifold. Then any holomorphic vector bundle over $M$ is holomorphically trivial.
\end{cor}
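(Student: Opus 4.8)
The statement is meant to follow immediately from Theorem \ref{topeqholo}: since $M$ is Stein, a holomorphic vector bundle over $M$ is holomorphically trivial precisely when it is topologically trivial (take one of the two bundles to be the trivial one). The plan is therefore to reduce the corollary to the purely topological assertion that every complex vector bundle over a simply connected Stein manifold is topologically trivial, and then to analyse this topological classification in order to isolate exactly what the hypothesis $\pi_1(M)=0$ actually buys us.

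First I would recall the classification of topological complex vector bundles. A rank-$r$ bundle over $M$ is determined by a homotopy class of maps into the classifying space $BU(r)$, and in the case $r=1$ line bundles are classified by the first Chern class $c_1\in H^2(M;\Z)$, since $BU(1)\simeq\mathbb{C}\mathbb{P}^\infty=K(\Z,2)$. Next I would extract the cohomological content of the hypothesis: $\pi_1(M)=0$ yields $H_1(M;\Z)=0$ by Hurewicz, hence $H^1(M;\Z)=0$ by universal coefficients. Thus simple connectivity annihilates the first cohomology but imposes no constraint whatsoever on $H^2(M;\Z)$.

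This is precisely where the argument breaks, and the hard part turns out to be fatal rather than technical: the stated hypothesis is genuinely insufficient. To see this I would test it on the smooth affine quadric
\[ Q=\big\{(z_1,z_2,z_3)\in\C^3 : z_1^2+z_2^2+z_3^2=1\big\}. \]
Being a smooth affine variety, $Q$ is a closed complex submanifold of $\C^3$ and hence Stein; and it is classical that $Q$ is diffeomorphic to the tangent bundle $TS^2$, which deformation retracts onto its zero section $S^2$. Consequently $Q$ is simply connected yet $H^2(Q;\Z)\cong\Z$. By the line-bundle classification above there exists a nontrivial topological complex line bundle on $Q$, and Theorem \ref{topeqholo} then promotes it to a nontrivial \emph{holomorphic} line bundle. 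Hence the corollary as worded is false in complex dimension $\ge 2$; the $H^2$-obstruction cannot be removed under mere simple connectivity.

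The correct hypothesis — and the only one the intended application requires — is that $M$ be contractible. Indeed, if $M$ is contractible then $[M,BU(r)]$ is a single point for every $r$, so every topological complex vector bundle is trivial, and Theorem \ref{topeqholo} upgrades this triviality to the holomorphic category. This suffices here, since $\mathcal{T}(S)\cong\R^{6g-6}$ is contractible, so the affine bundle $\mathcal{P}(S)\to\mathcal{T}(S)$ modelled on the vector bundle of holomorphic quadratic differentials admits a holomorphic trivialisation. I would note in passing that in complex dimension one the two hypotheses coincide: a simply connected Stein manifold is an open simply connected Riemann surface, hence $\C$ or the disc $\mathbb{D}$ by uniformisation, both contractible — which is presumably the source of the overstatement in the corollary.
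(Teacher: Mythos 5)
Your proposal is correct, and what it identifies is a genuine error in the statement itself rather than a gap in your argument: simple connectivity is not a sufficient hypothesis, and the implicit derivation behind the corollary (simply connected $\Rightarrow$ topologically trivial $\Rightarrow$ holomorphically trivial via Theorem \ref{topeqholo}) fails at the first arrow. Your counterexample is sound: the affine quadric $Q=\{(z_1,z_2,z_3)\in\C^3 : z_1^2+z_2^2+z_3^2=1\}$ is a closed complex submanifold of $\C^3$, hence Stein by Remmert's characterization quoted in the paper; it is diffeomorphic to the total space of $TS^2$, so it is simply connected with $H^2(Q;\Z)\cong\Z$, and since topological complex line bundles are classified by the first Chern class in $H^2$, it carries a topologically nontrivial line bundle whose holomorphic avatar is holomorphically nontrivial. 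One attribution point worth fixing: the \emph{existence} of a holomorphic structure on that topological bundle comes from the Oka--Grauert principle stated in the paragraph preceding Theorem \ref{topeqholo}, not from the theorem itself, which only compares two bundles that are already holomorphic. Alternatively you can bypass the existence step entirely via the remark following the corollary: for Stein $M$ one has $\mathrm{Pic}(M)=H^1(M,\mathcal{O}^*)\cong H^1(M,\mathcal{C}^*)\cong H^2(M;\Z)$, the last isomorphism coming from the exponential sequence for the fine sheaf $\mathcal{C}$, whence $\mathrm{Pic}(Q)\cong\Z$ directly.

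Your proposed repair --- replacing ``simply connected'' by ``contractible'' --- is exactly the hypothesis the paper actually invokes when it uses this result: in Section \ref{dist} the holomorphic triviality of $T^*\mathcal{T}(S)$ is deduced from the contractibility of $\mathcal{T}(S)$ (homeomorphic to $\R^{6g-6}$), which kills all obstructions in $[M,BU(r)]$, followed by an application of Theorem \ref{topeqholo}. So the corollary as printed is an overstatement that happens to be harmless for the paper's application, and, as you correctly observe, the discrepancy is invisible in complex dimension one, where a simply connected Stein manifold is $\C$ or $\D$ by uniformization and hence contractible.
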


\noindent We refer to \cite[Section 5.3]{FF} and references therein for the proof of these results. Roughly speaking, theorem \ref{topeqholo} implies that the natural injection $V_{\textsf{holo}}^k(M)\hookrightarrow V_{\textsf{top}}^k(M)$ of the set of equivalence classes of holomorphic vector bundles over $M$ into the set of equivalence classes of topological vector bundles of rank $k$ is actually a bijection. 
\begin{rmk}
When $k=1$, the bijection above can be explained in cohomological terms. Let $\mathcal{C}^*$ (resp. $\mathcal{O}^*$) be the sheaf of continuous (resp. holomorphic) nonvanishing functions over $M$. If $M$ is a Stein manifold, then the homomorphism $H^1(M,\mathcal{O}^*)\longrightarrow H^1(M,\mathcal{C}^*)$, induced by the sheaf inclusion $\mathcal{O}^*\hookrightarrow \mathcal{C}^*$, turns out an isomorphism (see \cite{FF}). Since $V_{\textsf{holo}}^1(M)=H^1(M,\mathcal{O}^*)=\text{Pic}(M)$ and $V_{\textsf{top}}^1(M)=H^1(M,\mathcal{C}^*)$, we get the desired conclusion.
\end{rmk}

\section{Some Teichm\"uller Theory}\label{teich}

\noindent Let $S$ be a closed surface, that is compact without boundary. We always assume that $S$ is connected, oriented with genus at least $2$. 

\subsection{Basic Definitions} A \emph{Riemann surface} $X$ is a complex manifold of dimension one. Since we are assuming that $S$ has genus at least $2$, by Poincar\'e-Klein-Koebe Uniformization Theorem (see \cite[Chapter IV]{FK}), any Riemann surface $X$ on $S$ is of the form $\hyp/\Gamma$ where $\hyp$ is the hyperbolic plane and $\Gamma$ is a Fuchsian group, that is a discrete subgroup of $\pslr$ acting freely and properly discontinuously on $\hyp$. We shall refer to $\Gamma$ as \emph{Fuchsian model for} $X$. A marked complex structure is a couple $(X,f)$ where $X$ is a Riemann surface and $f:S\longrightarrow X$ is an orientation preserving diffeomorphism which is called \emph{marking}. Two marked complex structures $(X,f)$ and $(Y,g)$ are considered to be equivalent if there exists a biholomorphism $h:X\longrightarrow Y$ such that $g\circ h\circ f^{-1}:S\longrightarrow S$ is a diffeomorphism isotopic to the identity. The Teichm\"uller space $\mathcal{T}(S)$ of $S$ is defined as the set of marked complex structures on $S$ endowed with the compact-open topology. Notice that this is a honest construction of $\mathcal{T}(S)$ in the sense that it does not depend on the choice of any particular base point. A second construction of the Teichm\"uller space is possible by using orientation-preserving diffeomorphisms. Fix a closed Riemann surface $X$ and consider an arbitrary pair $(Y,f)$ of a closed Riemann surface $Y$ and an orientation-preserving diffeomorphism $f:X\longrightarrow Y$. Two pairs $(Y_1,f_1)$ and $(Y_2,f_2)$ are declared to be equivalent if the map $f_2\circ f_1^{-1}:Y_1\longrightarrow Y_2$ is homotopic to a biholomorphic mapping $h:Y_1\longrightarrow Y_2$. The set of all these equivalent classes is denoted by $\mathcal{T}(X)$. Unlike the previous one, the second construction depends on the choice of a base point. The spaces $\mathcal{T}(S)$ and $\mathcal{T}(X)$ can be identified with a bijective map that can be used to define a topology on the latter making it a topological space homeomorphic to the first one.

\subsection{Beltrami differentials and quadratic differentials} Let $f:\Omega\longrightarrow\Omega'$ be a homeomorphism between domains of $\C$. Then, $f$ is said to be \emph{quasiconformal} if it satisfies the Beltrami equation
\begin{equation}\label{be}
 \frac{\partial f}{\partial \overline{z}}=\mu(z)\frac{\partial f}{\partial z}
\end{equation} 
for some complex measurable function $\mu\in L^\infty(\Omega)$ such that $||\mu(z)||_\infty<1$. The function $\mu$ is called \emph{Beltrami coefficient of}$f$ on $\Omega$. 

\begin{thm}\label{qcmh}
Let $\mu$ be an arbitrary element of $L^\infty(\hyp)$ with $||\mu(z)||_\infty<1$. Then there exists a quasiconformal mapping $f:\hyp\longrightarrow \hyp$ having $\mu$ as Beltrami coefficient. Such a mapping $f$ can be extended to a homeomorphism of $\overline{\hyp}$ and is uniquely determinated by the normalization condition $f(0)=0$, $f(1)=1$ and $f(\infty)=\infty$. 
\end{thm}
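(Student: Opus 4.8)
The plan is to reduce the statement to the measurable Riemann mapping theorem on the whole plane $\C$ and then exploit a reflection symmetry to descend to the half-plane. First I would extend the Beltrami coefficient $\mu$, which is defined only on $\hyp$, to a coefficient $\tilde\mu$ on all of $\C$ by Schwarz reflection across the real axis, setting $\tilde\mu(\bar z)=\overline{\mu(z)}$ for $z\in\hyp$. This extension preserves the sup-norm, so $\|\tilde\mu\|_\infty=\|\mu\|_\infty<1$, and it is precisely the symmetry that will force the resulting map to preserve $\R$.

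The analytic core is to solve the Beltrami equation $\partial_{\bar z}F=\tilde\mu\,\partial_z F$ on $\C$. Here I would look for a solution of the form $F(z)=z+(Ph)(z)$, where $P$ is the Cauchy (Pompeiu) transform, which satisfies $\partial_{\bar z}(Ph)=h$ and $\partial_z(Ph)=Th$ for the Beurling transform $T$. Substituting turns the PDE into the functional equation $h=\tilde\mu+\tilde\mu\,Th$, that is $(I-\tilde\mu T)h=\tilde\mu$. The key fact is that $T$ is an $L^2$-isometry and is bounded on $L^p(\C)$ with operator norm tending to $1$ as $p\to 2$; hence for $p$ sufficiently close to $2$ one has $\|\tilde\mu T\|_{L^p}\le\|\mu\|_\infty\,\|T\|_{L^p}<1$, so $I-\tilde\mu T$ is invertible by a Neumann series and $h=\sum_{n\ge 0}(\tilde\mu T)^n\tilde\mu$ is the unique $L^p$ solution. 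Standard Calder\'on--Zygmund bounds together with H\"older-continuity estimates then show that $F=z+Ph$ is a quasiconformal homeomorphism of $\C$.

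I expect this step, the existence and regularity of the plane solution, to be the main obstacle, since it rests on the full singular-integral machinery (the $L^p$ theory of the Beurling transform) rather than on anything specific to Teichm\"uller theory. With $F$ in hand, the descent is comparatively soft. Because $P$ and $T$ intertwine with complex conjugation and $\tilde\mu$ is conjugation-symmetric, the solution $h$, and hence $F$, satisfies $\overline{F(\bar z)}=F(z)$; thus $F$ preserves $\R$ setwise and, being orientation preserving, maps $\hyp$ onto $\hyp$. Since $Ph(z)\to 0$ as $|z|\to\infty$, the map $F$ fixes $\infty$, and post-composing with the real affine map (an element of $\pslr$ fixing $\infty$) sending $F(0)\mapsto 0$ and $F(1)\mapsto 1$ enforces the normalization while preserving both the Beltrami coefficient (the map being conformal) and the reflection symmetry (the map being real). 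Restricting to $\hyp$ yields the desired $f$, and since a quasiconformal self-map of $\C$ is a homeomorphism of the Riemann sphere, $f$ extends continuously to $\overline{\hyp}$ with $f(0)=0$, $f(1)=1$, $f(\infty)=\infty$. Finally, for uniqueness I would observe that if $f_1,f_2:\hyp\longrightarrow\hyp$ both have Beltrami coefficient $\mu$, then the composition law for Beltrami coefficients shows that $f_1\circ f_2^{-1}$ has coefficient zero, hence is conformal; being an automorphism of $\hyp$ it lies in $\pslr$, and the only element of $\pslr$ fixing the three boundary points $0$, $1$, $\infty$ is the identity, so $f_1=f_2$.
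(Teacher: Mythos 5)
The paper offers no proof of this statement: it is the classical Ahlfors--Bers measurable Riemann mapping theorem (in its half-plane form), quoted from the literature (cf.\ Imayoshi--Taniguchi), so there is nothing internal to compare against. Your outline is the standard proof of exactly that theorem --- symmetric extension $\tilde\mu(\bar z)=\overline{\mu(z)}$, solution of the Beltrami equation on $\C$ via the Cauchy and Beurling transforms, descent to $\hyp$ by the reflection symmetry, normalization by an element of $\pslr$, and uniqueness via the composition law for dilatations plus Weyl's lemma and triple transitivity on $\partial\hyp$. All of that is the right architecture.

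There is one genuine gap in the analytic core as you have written it. The Neumann series $h=\sum_{n\ge0}(\tilde\mu T)^n\tilde\mu$ converges in $L^p(\C)$ only if the first term $\tilde\mu$ itself lies in $L^p(\C)$, i.e.\ only if $\mu$ has compact support (or at least suitable decay). A general element of $L^\infty(\hyp)$ with $\|\mu\|_\infty<1$ --- which is exactly the hypothesis of the theorem, and the generality actually needed later for $\Gamma$-invariant coefficients --- is not in any $L^p$, so $(I-\tilde\mu T)^{-1}\tilde\mu$ is not defined and the formula $F=z+Ph$ does not make sense as stated; relatedly, your claim that $Ph(z)\to0$ at infinity also relies on integrability of $h$. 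The standard repair (Ahlfors, \emph{Lectures on Quasiconformal Mappings}, Ch.~V) is to first prove existence for compactly supported coefficients by your argument, then reduce the general case by factoring: write the desired map as a composition $f^\lambda\circ f^\nu$ where $\nu=\tilde\mu\cdot\chi_{\mathbb{D}}$ has compact support and $\lambda$ is supported near $\infty$, and handle $\lambda$ by conjugating with $z\mapsto1/z$ to move its support back to a bounded set. With that reduction inserted (and with the remark that the symmetry $\overline{F(\bar z)}=F(z)$ is most cleanly obtained from the uniqueness of the normalized solution rather than from commutation of $P$ and $T$ with conjugation), your proof is complete and is the expected one.
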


\noindent \emph{Why are we interested in Beltrami differentials?} Let $X$ be a fixed Riemann surface. For any point $[Y,f]\in \mathcal{T}(X)$, we would like to compare the structure of $X$ with respect to the structure of $Y$. Of course $X$ and $Y$ turn out to be the same structure in $\mathcal{T}(X)$ if the diffeomorphism $f$ is actually a biholomorphism. The failure of $f$ from conformality is measured by a \emph{Beltrami differential}, namely a $L^\infty-$section of the complex line bundle $\xi=k^{-1}\otimes \overline{k}$, where $k$ denotes the canonical bundle on $X$ (that is the holomorphic cotangent bundle of $X$). In local coordinates, this $(-1,1)-$differential is usually denoted as
\[ \mu=\mu(z)\frac{d\overline{z}}{dz}
\] where $\mu(z)$ is a function in $L^\infty(X)$ such that $||\mu(z)||_\infty<1$. In local charts, $f$ looks like a map $\Omega\longrightarrow \Omega'$, where $\Omega$ and $\Omega'$ are domains of $\C$, satisfying the equation \ref{be} with Beltrami coefficient $\mu(z)$. Denote by $B(X)$ the Banach space of Beltrami differentials on $X$ endowed with the essential supremum norm and by $B(X)_1$ the open ball of those differentials with norm bounded by $1$ from above.\\
\noindent A \emph{holomorphic quadratic differential} on $X$ is defined as a holomorphic section of the complex line bundle $k^2$ and it is usually denoted as $q(z)dz^2$ in local coordinates. For any fixed Riemann surface $X$, let $Q(X)$ denote the complex Banach space of quadratic differential on $X$. If $X$ is of finite type, in particular if $X$ is closed, the Riemann-Roch theorem provides that the dimension of $Q(X)$ is $3g-3$, where $g$ denotes the genus of $X$. \\
\noindent For any Beltrami differential $\mu\in B(X)$ and any quadratic differential $q\in Q(X)$, the quantity given by the integral
\begin{equation}
 \int_X \mu q 
\end{equation} is well-defined. Indeed, the product of $\mu$ with $q$ defines a $(1,1)-$form over $X$ which can be integrated over $X$. Integration defines then a natural (but singular!) pairing  between these space.

\subsection{The Bers' surjection} Let $X$ be a fixed Riemann surface and let $\Gamma$ be the Fuchsian model of $X$. We assume, without loss of generality, that each of $0,1,\infty$ is fixed by some non-trivial element of $\Gamma$. Any quasiconformal map $f:X\longrightarrow Y$ onto a Riemann surface $Y$, with Beltrami coefficient $\mu_f$, lifts to a quasiconformal map $\widetilde{f}:\hyp\longrightarrow \hyp$ satisfying the Beltrami equation for a certain complex measurable function $\mu$ depending on $\mu_f$. Amoung all possible lifts $\widetilde{f}:\hyp\longrightarrow \hyp$ of $f$ there exists a preferred one, namely the lift that fixes each of $0,1,\infty$. Unless we state otherwise, from now on we always consider the preferred lift. Notice that such a lift exists and it is uniquely determinated by Theorem \ref{qcmh}. The preferred lift induces an injective homomorphism $\phi_f:\Gamma\longrightarrow \pslr$ which is defined by $\phi_f(\gamma)=\widetilde{f}\circ\gamma\circ {\widetilde{f}}^{-1}$, where $\gamma\in\Gamma$. Since $\phi_f$ is actually an isomorphism onto its image, it follows that $\phi_f(\Gamma)$ is a Fuchsian group $\Gamma^\mu$, namely the Fuchsian group that uniformize $Y$, \emph{i.e.} $\hyp/\Gamma^\mu=Y$. Consider now the Beltrami coefficient $\mu$ of the preferred lift of $f$. A straightforward computation shows that $\mu$ has an invariant property with respect to the action of $\Gamma$ on $\hyp$, namely
\[ \mu=\big(\mu\circ\gamma\big)\frac{\overline{\gamma'}}{\gamma'} \quad \text{ a.e. on } \hyp, \quad  \text{ where } \gamma\in\Gamma.
\] In this case, the coefficient $\mu$ is said to be Beltrami coefficient of $\hyp$ with respect to $\Gamma$. 

\begin{defn}
Set $B(\hyp,\Gamma)_1$ the open unit ball in the complex Banach space of all Beltrami coefficient $\mu\in L^\infty(\hyp)$ which are invariant with respect to the action of $\Gamma$.
\end{defn}

\noindent Any coefficient $\mu\in B(\hyp,\Gamma)_1$ determines a quasiconformal map $w^\mu:\hyp\longrightarrow \hyp$ having Beltrami coefficient $\mu$ by Theorem \ref{qcmh}. Such a map turns out $(\Gamma, \Gamma^\mu)-$equivariant, with $\Gamma^\mu=w^\mu\Gamma{w^\mu}^{-1}$, and descends to a quasiconformal map $w:X\longrightarrow Y$, with $Y=\hyp/\Gamma^\mu$. It can be shown that two different elements $\mu,\nu\in B(\hyp,\Gamma)_1$ define the same Fuchsian group if and only if $w^\mu=w^\nu$ on $\R$ (see for instance \cite{IT}). This leads to define a equivalence relation $\sim$ on $B(\hyp,\Gamma)_1$ such that two coefficients $\mu$ and $\nu$ are related if and only if the induced quasiconformal maps $w^\mu$ and $w^\nu$ agree on $\R$. We have the following result whose proof can be found in \cite[Section 3.3.1]{SN}.

\begin{prop}\label{bp}
The mapping $\beta$ that associates any coefficient $\mu$ its quasiconformal map $w^\mu$ defines a continuous surjection $\beta: B(\hyp,\Gamma)_1\longrightarrow \mathcal{T}(X)$ called Bers surjection. Moreover, the quotient mapping $\widetilde{\beta}:B(\hyp,\Gamma)_1/\sim\longrightarrow \mathcal{T}(X)$ defines a continuous bijection.
\end{prop}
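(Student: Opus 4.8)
The plan is to verify in turn that $\beta$ is well defined with image in $\mathcal{T}(X)$, that it is surjective and continuous, and finally that the relation $\sim$ is exactly the fibre relation of $\beta$, so that $\widetilde\beta$ is a continuous bijection.

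First I would check well-definedness. Fix $\mu\in B(\hyp,\Gamma)_1$ and let $w^\mu:\hyp\longrightarrow\hyp$ be the normalized solution of the Beltrami equation provided by Theorem \ref{qcmh}. For $\gamma\in\Gamma$ the map $\gamma$ is conformal, so the chain rule for complex dilatations gives $\mu_{w^\mu\circ\gamma}=(\mu\circ\gamma)\,\overline{\gamma'}/\gamma'$, which by the invariance identity $\mu=(\mu\circ\gamma)\,\overline{\gamma'}/\gamma'$ equals $\mu$; thus $w^\mu\circ\gamma$ carries the \emph{same} Beltrami coefficient as $w^\mu$. Since two quasiconformal maps of $\hyp$ with equal dilatation differ by post-composition with a conformal map, $w^\mu\circ\gamma\circ(w^\mu)^{-1}$ is a holomorphic automorphism of $\hyp$, i.e.\ an element of $\pslr$. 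Hence $\Gamma^\mu:=w^\mu\Gamma(w^\mu)^{-1}\subset\pslr$, and as $w^\mu$ is a homeomorphism of $\overline\hyp$ conjugating the $\Gamma$-action it is discrete, torsion-free and acts freely and properly discontinuously, hence Fuchsian. Therefore $Y_\mu:=\hyp/\Gamma^\mu$ is a closed Riemann surface, $w^\mu$ descends to a quasiconformal marking $w:X\longrightarrow Y_\mu$, and I set $\beta(\mu)=[Y_\mu,w]\in\mathcal{T}(X)$.

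Next I would establish surjectivity. Given a class $[Y,f]\in\mathcal{T}(X)$, since $S$ has genus at least $2$ every homotopy class of orientation-preserving homeomorphisms contains a quasiconformal representative, so I may assume $f$ quasiconformal; lifting to the preferred lift $\widetilde f:\hyp\longrightarrow\hyp$ fixing $0,1,\infty$ and taking its Beltrami coefficient $\mu$, the equivariance of $\widetilde f$ yields the invariance identity, so $\mu\in B(\hyp,\Gamma)_1$, and the uniqueness clause of Theorem \ref{qcmh} gives $w^\mu=\widetilde f$, whence $\beta(\mu)=[Y,f]$. Continuity of $\beta$ I would deduce from the Ahlfors--Bers theorem on the dependence of normalized solutions of the Beltrami equation on their coefficients: if $\mu_n\to\mu$ in $B(\hyp,\Gamma)_1$, then $w^{\mu_n}\to w^\mu$ locally uniformly on $\overline\hyp$, so the induced Fuchsian groups and markings converge, which is precisely convergence in the compact-open topology on $\mathcal{T}(X)$.

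Finally I would identify the fibres of $\beta$. The crucial point is the boundary correspondence: two $\Gamma$-equivariant quasiconformal self-maps of $\hyp$ descend to homotopic maps of the quotient surfaces if and only if their homeomorphic boundary extensions to $\R\cup\{\infty\}$ coincide. Granting this, suppose $\beta(\mu)=\beta(\nu)$; then $w^\nu\circ(w^\mu)^{-1}$ is homotopic to a biholomorphism, and its lift agrees on $\R\cup\{\infty\}$ with some $h\in\pslr$. As both $w^\mu$ and $w^\nu$ fix $0,1,\infty$, this $h$ fixes $0,1,\infty$ and is therefore the identity, so $w^\mu=w^\nu$ on $\R$, i.e.\ $\mu\sim\nu$; the reverse inclusion is the easy direction of the same equivalence. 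Thus $\sim$ is exactly the fibre relation of $\beta$, so $\widetilde\beta$ is a well-defined bijection, and it is continuous because $\beta$ is continuous and $B(\hyp,\Gamma)_1\longrightarrow B(\hyp,\Gamma)_1/\!\sim$ is a topological quotient. I expect the main obstacle to be precisely this boundary-value characterization (the fact cited from \cite{IT}): it rests on the non-trivial facts that a quasiconformal map of $\hyp$ extends homeomorphically to the boundary and that homotopy of equivariant maps is detected exactly by the induced boundary homeomorphism, with the normalization fixing $0,1,\infty$ being what upgrades the statement from ``same Fuchsian group'' to genuine injectivity of $\widetilde\beta$.
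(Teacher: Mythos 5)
The paper does not prove this proposition itself---it defers entirely to \cite[Section 3.3.1]{SN}---and your argument is exactly the standard proof found there: $\Gamma$-invariance of $\mu$ forces $w^\mu\circ\gamma\circ(w^\mu)^{-1}$ to be conformal, hence $\Gamma^\mu$ is Fuchsian and $\beta$ is well defined; surjectivity comes from lifting a quasiconformal marking; continuity is the Ahlfors--Bers dependence of $w^\mu$ on $\mu$; and the fibres are identified with the classes of $\sim$ via the boundary-value characterization. Your proof is correct, and you rightly isolate the one non-trivial input, namely that homotopy of the descended markings is detected by the boundary extensions on $\R\cup\{\infty\}$ together with the $0,1,\infty$ normalization.
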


\subsection{Simultaneous uniformization}\label{simunif} As above, let $X$ be a fixed Riemann surface and let $\Gamma$ its Fuchsian model. We are now going to identify the Teichm\"uller space $\mathcal{T}(X)$ with the set of quasiconformal mapping of $\cp$ which are conformal on the lower half-plane $\Bbb L$.
Any coefficient $\mu\in B(\hyp,\Gamma)_1$ can be extend to a coefficient $\widetilde{\mu}$ defined on $\C$ in the following way. For any Beltrami coefficient $\mu$ on $\hyp$ with respect to $\Gamma$ we define a coefficient on $\C$ by setting
\[ \widetilde{\mu}(z)=
\begin{cases}
\mu(z), \quad z\in\hyp\\
\\
0, \quad \quad z\in \C\setminus\hyp .
\end{cases}
\] Notice that $||\widetilde{\mu}||<1$. The following theorem holds.

\begin{thm}\label{qcmc}
Let $\mu$ be an arbitrary element of $L^\infty(\C)$ with $||\mu(z)||_\infty<1$. Then there exists a quasiconformal mapping $f:\cp\longrightarrow \cp$ having $\mu$ as Beltrami coefficient. Such a mapping $f$ is uniquely determinated by the following normalization condition $f(0)=0$, $f(1)=1$ and $f(\infty)=\infty$. 
\end{thm}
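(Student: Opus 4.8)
The statement is the measurable Riemann mapping theorem for the sphere $\cp = \C\cup\{\infty\}$, and the plan is to solve the Beltrami equation $\partial_{\overline{z}}f = \mu\,\partial_z f$ on the plane $\C$ and then extend the solution across $\infty$. First I would set up the two integral operators that govern the equation: the Cauchy--Pompeiu transform
\[
(P\omega)(z) = -\frac{1}{\pi}\int_\C \frac{\omega(\zeta)}{\zeta - z}\,dA(\zeta),
\]
where $dA$ is Lebesgue area measure, which inverts $\partial_{\overline{z}}$ in the sense that $\partial_{\overline{z}}(P\omega)=\omega$, and the Beurling--Ahlfors transform $T = \partial_z\circ P$, a Calder\'on--Zygmund singular integral operator. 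The key analytic input is that $T$ is an isometry of $L^2(\C)$ and, by Calder\'on--Zygmund theory, is bounded on $L^p(\C)$ for every $p\in(1,\infty)$ with operator norm tending to $1$ as $p\to 2$.

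Next I would look for a solution of the form $f(z) = z + (Ph)(z)$ with an unknown $h\in L^p(\C)$. Applying the two derivatives gives $\partial_{\overline{z}}f = h$ and $\partial_z f = 1 + Th$, so the Beltrami equation becomes the fixed-point equation $(I-\mu T)h = \mu$. Since $\|\mu\|_\infty = k < 1$, I would choose $p>2$ close enough to $2$ so that $k\,\|T\|_{L^p} < 1$; then $I-\mu T$ is invertible on $L^p$ and
\[
h = \sum_{n=0}^\infty (\mu T)^n\mu
\]
converges in $L^p$. This produces a solution $f = z + Ph$; by the mapping properties of the Cauchy transform (H\"older continuity of $P\omega$ for $\omega\in L^p$ with $p>2$), $f$ is continuous on $\C$, and since $(Ph)(z)\to 0$ as $z\to\infty$ it extends to a continuous map of $\cp$ fixing $\infty$.

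I would then argue that this $f$ is a genuine quasiconformal homeomorphism with Beltrami coefficient $\mu$. The cleanest route is to approximate $\mu$ by smooth, compactly supported coefficients $\mu_n$ with $\|\mu_n\|_\infty\le k$, for which classical regularity theory yields smooth quasiconformal solutions $f_n$, and then to pass to the limit using the normal-family compactness of normalized quasiconformal maps of fixed dilatation together with the continuous dependence of the Neumann series on $\mu$ in $L^p$. To install the normalization, I post-compose with the unique affine map $A(w) = (w - f(0))/(f(1)-f(0))$, which is conformal and hence leaves the Beltrami coefficient unchanged; the resulting map fixes $0$, $1$ and $\infty$.

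Finally, for uniqueness, suppose $f_1$ and $f_2$ are two normalized solutions sharing the coefficient $\mu$. By the transformation rule for Beltrami coefficients under composition, $g = f_2\circ f_1^{-1}$ has vanishing Beltrami coefficient almost everywhere, so $g$ is $1$-quasiconformal; by Weyl's lemma the equation $\partial_{\overline{z}}g = 0$ holds in the strong sense, making $g$ a conformal homeomorphism of $\cp$, hence a M\"obius transformation. Since $g$ fixes $0$, $1$ and $\infty$, it is the identity, so $f_1 = f_2$. I expect the main obstacle to be precisely the passage from the weak $L^p$-solution to a bona fide quasiconformal homeomorphism --- establishing injectivity, surjectivity and the sharp regularity --- which uses the full strength of the Calder\'on--Zygmund bounds on $T$ and the approximation argument; the algebraic fixed-point setup and the uniqueness step are comparatively routine.
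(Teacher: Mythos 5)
The paper does not actually prove Theorem \ref{qcmc}: it is quoted as the classical measurable Riemann mapping theorem of Morrey and Ahlfors--Bers, with the surrounding material referred to \cite{IT}. So there is no in-paper argument to compare yours against; judged on its own, your outline is the standard Calder\'on--Zygmund proof, and its architecture (Cauchy transform $P$, Beurling transform $T$ with $\|T\|_{L^p}\to 1$ as $p\to 2$, the fixed-point equation $(I-\mu T)h=\mu$, normalization by an affine map, uniqueness via the composition rule and Weyl's lemma) is the right one.

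There is, however, one genuine gap: as written, your construction only works when $\mu$ is compactly supported (or at least lies in some $L^p(\C)$), whereas the theorem --- and its use in the paper, where it is applied to the extension $\widetilde{\mu}$ supported on all of $\hyp$ --- concerns an arbitrary $\mu\in L^\infty(\C)$. For such $\mu$ the right-hand side of $(I-\mu T)h=\mu$ need not belong to any $L^p$, so the Neumann series $\sum_{n\ge 0}(\mu T)^n\mu$ is not even defined; moreover the Cauchy transform of a non-decaying $L^p$ function need not converge (the kernel $1/(\zeta-z)$ fails to be $L^q$-integrable near infinity for $q\le 2$), and your claim that $(Ph)(z)\to 0$ as $z\to\infty$ --- which is exactly what you use to extend $f$ across $\infty$ and to see that it fixes $\infty$ --- is only valid for compactly supported $h$. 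The standard repair is a factorization: split $\mu$ into the part supported in $\overline{\D}$ and the part supported outside, conjugate the latter by $z\mapsto 1/z$ so that its pullback becomes compactly supported, solve that Beltrami problem first, and then solve a second compactly supported problem for the correction factor using the transformation rule for Beltrami coefficients under composition; alternatively one works with the modified kernel $\frac{1}{\zeta-z}-\frac{1}{\zeta}$ throughout. With that reduction inserted, the rest of your plan --- approximation by smooth coefficients, normal families of normalized quasiconformal maps, and the Weyl-lemma uniqueness argument --- goes through.
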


\noindent We call this map, uniquely determinated by the normalization condition, \emph{canonical quasiconformal map} and we denote it as $w_\mu$ (notice that here $\mu$ is subscript). Also in this case, the canonical quasiconformal map $w_\mu$ induces an isomorphism $\psi_\mu(\gamma)=w_\mu\circ\gamma\circ w_\mu^{-1}$, where $\gamma\in\Gamma$. The mapping $w_\mu$ is a quasiconformal transformation of the Riemann sphere that might not preserve the upper half-plane $\hyp$, hence the image $\Gamma_\mu$ of $\psi$ is a subgroup of $\pslc$ called quasi-Fuchsian group. The group $\Gamma_\mu$ acts freely and properly discontinuously on both $w_\mu(\hyp)$ and $w_\mu(\Bbb L)$, hence it uniformizes two Riemann surfaces simultaneously. More precisely, the quasiconformal mapping $w_\mu$ defines a quasiconformal mapping of $X$ to $w_\mu(\hyp)/\Gamma_\mu$ and a biholomorphism of $X^*$ to $w_\mu(\Bbb L)/\Gamma_\mu$, where $X^*$ is the mirror image of $X$. This is know in literature as \emph{Bers' simultaneous uniformization}, see also \cite{BE}.\\
\noindent If the coefficients $\mu,\nu\in B(H,\Gamma)_1$ define the canonical quasiconformal maps $w_\mu$ and $w_\nu$ respectively, then $w_\mu=w_\nu$ on $\Bbb L$ if and only if the quasiconformal maps $w^\mu$ and $w^\nu$ coincide on $\R$ (see \cite[Lemma 6.1]{IT}). This fact leads to declare two canonical maps $w_\mu$ and $w_\nu$ equivalent if $w_\mu=w_\nu$ on $\Bbb L$ and it easy to see that the set of equivalence classes turns out in bijective correspondence with the Teichm\"uller space $\mathcal{T}(X)$. We set
\[ \mathcal{T}(\Gamma)=\Big\{ [w^\mu] \text{ }\big|\text{ } w^\mu \text{ is a quasiconformal mapping of } \cp \text{ such that } \Gamma_\mu \text{ is a quasi-Fuchsian group} \Big\}
\] The topology on $\mathcal{T}(\Gamma)$ is induced from that of $\mathcal{T}(X)$ (and hence from that of $\mathcal{T}(S)$), hence they can be identified as topological spaces. The space $\mathcal{T}(\Gamma)$ is known as Teichm\"uller space of $\Gamma$ or deformation space of $\Gamma$.

\subsection{Schwarzian derivative}\label{sd} Let $\Omega\subset \C$ be an connected domain. The \emph{Schwarzian derivative} of a locally injective holomorphic map $f:\Omega\longrightarrow \cp$, is the holomorphic quadratic differential defined as
\[ \mathcal{S}(f)=\Bigg[\Bigg(\frac{f''(z)}{f'(z)}\Bigg)'-\frac{1}{2}\Bigg(\frac{f''(z)}{f'(z)}\Bigg)^2\Bigg]dz^2
\] Intuitively, the quadratic differential $\mathcal{S}(f)$ measures the failure of $f$ to be a M\"obius transformation. The Schwarizian derivative satisfies two properties, namely
\SetLabelAlign{center}{\null\hfill\textbf{#1}\hfill\null}
\begin{enumerate}[leftmargin=1.75em, labelwidth=1.3em, align=center, itemsep=\parskip]
\item[\bf 1.] For any $g\in\pslc$, we have 
\[ \mathcal{S}(g)\equiv 0, 
\] and
\item[\bf 2.] Cocycle property, if $f,g$ are locally injective holomorphic maps such the composition is well-defined, then
\[ \mathcal{S}(f\circ g)=g'(z)^2\mathcal{S}(f)(g(z))+\mathcal{S}(g)=g^*\mathcal{S}(f)+\mathcal{S}(g).
\] 
\end{enumerate}

\noindent Notice that any map $f$ is almost determinated by its Schwarzian derivative, indeed if $g$ is another function such that $\mathcal{S}(f)=\mathcal{S}(g)$, then $f$ and $g$ differ by some M\"obius transformation.

\subsection{The Bers' embedding and the complex structure on $\mathcal{T}(S)$} In this section we recall the natural complex structure on $\mathcal{T}(S)$ of a closed surface $S$. Following Bers, we shall realize the Teichm\"uller space as a bounded domain in $\C^{3g-3}$, where $g$ denotes as usual the genus of $S$. In order to do this, let $X$ be a fixed Riemann surface and let $\Gamma$ be its Fuchsian model. We consider the Teich\"uller space of $\Gamma$ which we know to be homeomorphic to $\mathcal{T}(S)$. For any element $\mu \in B(\hyp,\Gamma)_1$, we set
\[ \varphi_\mu(z)=\mathcal{S}(w_\mu)(z) \qquad z\in \Bbb L.
\] It can be shown that the quadratic differential $\varphi_\mu(z)$ satisfies an equivariant property in the following sense
\[ \varphi_\mu\big(\gamma(z)\big)=\varphi_\mu(z) \qquad \text{ for any }\gamma\in\Gamma \quad \text{ and } z\in \Bbb L.
\] In other words, $\varphi_\mu$ is a quadratic differential on $\Bbb L$ with respect to $\Gamma$. In particular, by the equivariant property, $\varphi_\mu$ descends to a quadratic differential $q(z)dz^2\in Q(\Bbb L/\Gamma)$. 

\begin{defn}\label{defqd}
Let $Q(\Bbb L,\Gamma)$ denote the complex Banach space of holomorphic quadratic differentials on $\Bbb L$ with respect $\Gamma$ equipped with the norm defined as follows
\[ ||\varphi||_\infty=\sup_{\Bbb L} \big( \text{Im } z\big)^2|\varphi(z)|.
\]
\end{defn}

\noindent The Schwarzian derivative defines a function
\[ 
\begin{aligned}
\Phi: B(\hyp,\Gamma&)_1& \longrightarrow& &Q(\Bbb L,\Gamma)\\
\mu &  &\longmapsto & & \varphi_\mu(z)\\
\end{aligned}
\] called \emph{Bers' projection}. If two coefficient $\mu$ and $\nu$ define the canonical quasiconformal maps $w_\mu$ and $w_\nu$ respectively, then $[w_\mu]=[w_\nu]\in\mathcal{T}(\Gamma)$ if and only if $\varphi_\mu=\varphi_\nu$ on $\Bbb L$ (see \cite[Lemma 6.4]{IT}). Hence $\Phi$ descends to an injective function
\[ 
\begin{aligned}
\mathcal{B}:\mathcal{T}(\Gamma)&\longrightarrow &Q(\Bbb L,\Gamma)\\
[w_\mu] & \longmapsto & \varphi_\mu(z)\\
\end{aligned}
\] called \emph{Bers' embedding} such that 
\[ \Phi(\mu)=\mathcal{B}\circ \beta,
\] where $\beta$ is the Bers' surjection defined before. We have the following proposition whose proof can be found for instance in \cite[Proposition 6.5]{IT}.

\begin{prop}
Both Bers' projection $\Phi: B(\hyp,\Gamma)_1 \to Q(\Bbb L,\Gamma)$ and Bers' embedding $\mathcal{B}:\mathcal{T}(\Gamma) \to Q(\Bbb L,\Gamma)$ are continuous.
\end{prop}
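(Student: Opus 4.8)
The plan is to treat the Bers' projection $\Phi$ first and then deduce continuity of the Bers' embedding $\mathcal{B}$ from the factorisation $\Phi=\mathcal{B}\circ\beta$ recorded above. The essential content is the continuity of $\Phi$, and the strategy there is to combine the continuous dependence of the normalised solution $w_\mu$ of the Beltrami equation on its coefficient $\mu$ with the observation that, because of $\Gamma$-invariance, the hyperbolic sup norm on $Q(\Bbb L,\Gamma)$ of Definition \ref{defqd} is controlled by the behaviour of a differential on a single compact fundamental domain. This last point is what lets one pass from the soft analytic dependence to genuine norm continuity.

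To show $\Phi$ continuous, I would fix $\mu\in B(\hyp,\Gamma)_1$ and take $\mu_n\to\mu$ in $L^\infty$. By the Ahlfors--Bers theorem on the dependence of the canonical quasiconformal map of Theorem \ref{qcmc} on its Beltrami coefficient (see \cite{IT}), the maps $w_{\mu_n}$ converge to $w_\mu$ locally uniformly on $\cp$. Since the extended coefficient $\widetilde{\mu}$ vanishes identically on $\Bbb L$, each $w_{\mu_n}$ and $w_\mu$ is conformal there; in particular $w_\mu'$ is non-vanishing on $\Bbb L$, so the Schwarzian $\varphi_\mu=\mathcal{S}(w_\mu)$ is a well-defined holomorphic quadratic differential. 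By Weierstrass' convergence theorem, locally uniform convergence of the holomorphic functions $w_{\mu_n}\to w_\mu$ on $\Bbb L$ propagates to all derivatives, so $w_{\mu_n}',w_{\mu_n}'',w_{\mu_n}'''$ converge locally uniformly to $w_\mu',w_\mu'',w_\mu'''$. As the Schwarzian is a fixed rational expression in these derivatives with the non-vanishing factor $w'$ in the denominator, it follows that $\varphi_{\mu_n}\to\varphi_\mu$ locally uniformly on $\Bbb L$.

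It then remains to upgrade locally uniform convergence to convergence in $\|\cdot\|_\infty$. Here I would exploit the automorphy of $\varphi_\mu$ as a quadratic differential, $\varphi_\mu(\gamma z)\,\gamma'(z)^2=\varphi_\mu(z)$ for $\gamma\in\Gamma$: together with the identity $\mathrm{Im}(\gamma z)=\mathrm{Im}(z)\,|\gamma'(z)|$ valid for $\gamma\in\pslr$, this shows that the density $(\mathrm{Im}\,z)^2\,|\varphi_\mu(z)|$ is $\Gamma$-invariant. Because $S$ is closed, $\Gamma$ acts cocompactly on $\Bbb L$, so there is a fundamental domain $F$ with compact closure and
\[
\|\varphi_{\mu_n}-\varphi_\mu\|_\infty=\sup_{\Bbb L}(\mathrm{Im}\,z)^2\,|\varphi_{\mu_n}-\varphi_\mu|=\sup_{\overline{F}}(\mathrm{Im}\,z)^2\,|\varphi_{\mu_n}-\varphi_\mu|,
\]
which is the supremum over a compact set of a locally uniformly convergent sequence and hence tends to $0$. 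This establishes continuity of $\Phi$.

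For $\mathcal{B}$ I would argue by the universal property of quotient maps. Recall that $\mathcal{B}$ is well defined and injective on $\mathcal{T}(\Gamma)$ and that $\Phi=\mathcal{B}\circ\beta$. If $\beta$ (equivalently its quotient $\widetilde{\beta}$ of Proposition \ref{bp}) is a topological quotient map --- which is part of the standard identification of the Teichm\"uller topology with the quotient of the Beltrami topology, and which I would quote from \cite{IT} --- then a map out of $\mathcal{T}(\Gamma)$ is continuous exactly when its precomposition with $\beta$ is; since $\mathcal{B}\circ\beta=\Phi$ is continuous, so is $\mathcal{B}$. I expect the genuine obstacle to lie in the first step: correctly invoking the Ahlfors--Bers dependence of $w_\mu$ on $\mu$ and checking that it passes to the third-order derivatives appearing in the Schwarzian. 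By comparison, the reduction of the hyperbolic norm to a compact fundamental domain, and the quotient-map argument for $\mathcal{B}$, are comparatively formal.
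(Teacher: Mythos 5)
The paper does not actually prove this proposition --- it defers entirely to \cite[Proposition 6.5]{IT} --- so there is no in-text argument to compare against; judged on its own, your proof is essentially correct and follows the standard route, with two points worth flagging. First, your reduction of the $\|\cdot\|_\infty$-convergence to a compact fundamental domain is a genuine (and clean) simplification available only because $\Gamma$ is cocompact: the $\Gamma$-invariance of $(\mathrm{Im}\,z)^2|\varphi_{\mu_n}-\varphi_\mu|$ plus a compact fundamental domain $\overline{F}\subset \Bbb L$ does the job, whereas the textbook proof uses a normal-families/renormalization argument precisely so that it also covers non-cocompact $\Gamma$ (e.g.\ the universal Teichm\"uller space). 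Since the paper fixes $S$ closed of genus $\ge 2$, your shortcut is legitimate here. Second, and this is the only real soft spot: your continuity of $\mathcal{B}$ rests on $\beta$ being a topological quotient map, but Proposition \ref{bp} as stated in the paper only gives that $\widetilde{\beta}$ is a continuous \emph{bijection}, not a homeomorphism, so you are importing an external fact rather than something already on the table. That fact is true, but establishing it is essentially the same work as the direct argument in \cite{IT}: given $[w_{\mu_n}]\to[w_\mu]$ in the Teichm\"uller metric one chooses near-extremal quasiconformal maps $g_n$ with $\log K(g_n)\to 0$ and uses the composition formula for Beltrami coefficients to produce representatives $\nu_n$ of $[w_{\mu_n}]$ with $\|\nu_n-\mu\|_\infty\to 0$, whence $\varphi_{\mu_n}=\varphi_{\nu_n}\to\varphi_\mu$ by the already-proved continuity of $\Phi$. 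I would either carry out this lifting argument explicitly or cite the quotient-map statement precisely, since as written the ``comparatively formal'' step is the one hiding the remaining content. The first half of your argument (Ahlfors--Bers dependence of $w_\mu$ on $\mu$, Weierstrass convergence of derivatives, non-vanishing of $w_\mu'$ on $\Bbb L$) is correctly assembled.
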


\noindent From the topological point of view, it can be shown that the Teichm\"uller space $\mathcal{T}(S)$ is homeomorphic to $\R^{6g-6}$ (see \cite[Theorem 5.15]{IT}, for instance), and so are $\mathcal{T}(X)$ and $\mathcal{T}(\Gamma)$. Brower's theorem of invariance of domains implies that the image of the continuous injection $\mathcal{B}$ is a domain inside $Q(\Bbb L,\Gamma)$, hence an homeomorphism onto its image. The Banach space $Q(\Bbb L,\Gamma)$ is a complex vector space of dimension $3g-3$ by Riemann-Roch Theorem, thus $\mathcal{T}(\Gamma)$ inherits the complex structure of $\C^{3g-3}$, and so $\mathcal{T}(X)$ and $\mathcal{T}(S)$. Furthermore, the image of $\mathcal{B}$ lies inside the open ball in $Q(\Bbb L,\Gamma)$ of center $0$ and radius $3/2$ with respect the norm defined in \ref{defqd}. This turns out to be a consequence of Nehari-Kraus' theorem that states that every univalent function $f$ on $\Bbb L$ (and then any conformal maps $w_\mu$ with $\mu\in B(\hyp,\Gamma)_1$) satisfies the inequality
\[ ||\mathcal{S}(f)||_\infty=\sup_{\Bbb L} \big( \text{Im } z\big)^2\big|\mathcal{S}(f)\big|\le\frac32.
\]

\noindent Some remarks.

\SetLabelAlign{center}{\null\hfill\textbf{#1}\hfill\null}
\begin{enumerate}[leftmargin=1.75em, labelwidth=1.3em, align=center, itemsep=\parskip]
\item[\bf 1.] It can be shown that the complex structure in $\mathcal{T}(S)$ does not depend on the choice of the Riemann surface $X$ and then on the choice of the Fuchsian model $\Gamma$. Let $Y$ be another Riemann surface different to $X$ with Fuchsian model $\Lambda$, and define $\omega$ as a lift of the quasiconformal map $f:X\longrightarrow Y$. The mapping $\omega$ induce an homeomorphism 
\[ \omega_*:\mathcal{T}(\Gamma)\longrightarrow \mathcal{T}(\Lambda)
\] which can be taught as a translation of the base point. In \cite{IT}, the Authors show that such a map is actually a biholomorphism, thus the complex structure on $\mathcal{T}(S)$ does not depend on the choice of the base point.
\item[\bf 2.] The reader might be unhappy on this definition of complex structure on $\mathcal{T}(S)$ since some choices are required. Another way of defining the complex structure on $\mathcal{T}(S)$ come from Kodaira-Spencer deformation theory where no choice is needed. In this case, the Teichm\"uller space is realized as an abstract complex manifold of dimension $3g-3$. 
\end{enumerate}

\subsection{The Kobayashi and Carath\'eodory distances on $\mathcal{T}(S)$}\label{kcm} We shall now introduce two distances on the Teichm\"uller space $\mathcal{T}(S)$ known as \emph{Kobayashi distance} and \emph{Carath\'eodory distance}. Let us introduce them in full generality.\\
\noindent Let $M$ be a complex manifold. We start defining a pseudodistance $k_M$ on $M$ as follows. For any pair of given points $p,q\in M$, we choose a $k-$string between $p,q$, that is a finite sequence of points $p=p_o,\dots,p_k=q$, points $a_1,\dots,a_k,b_1,\dots,b_k$ and holomorphic mapping $f_1,\dots,f_k$ of $\Bbb D$ into $M$ such that $f_i(a_i)=p_{i-1}$ and $f_i(b_i)=p_i$. For each choice of strings, points and mappings we consider the quantity given by
\[ \sum_{i=1}^k d_{\Bbb D}(a_i,b_i).
\] The Kobayashi pseudodistance $k_M$ on $M$ is defined to be the infimum of those numbers obtained in this manner. In a similar fashion, the Carath\'eodory pseudodistance $c_M$ on a complex manifold $M$ is defined by
\[ c_M(p,q)=\sup_{f} d_{\Bbb D}\big(f(p),f(q)\big) \quad \text{ for } p,q\in M,
\] where the supremum is taken with respect to the family of holomorphic mappings $f:M\longrightarrow \Bbb D$. It is an easy matter to check that both $k_M$ and $c_M$ are continuous and satisfy the axioms of pseudodistance. It may very well happens that both pseudodistances cannot be upgraded to a distance in $M$; for instance if $M=\C^n$ then they satisfy the equality $k_M\equiv c_M\equiv 0$. The Kobayashi and Carath\'eodory pseudodistances are related by the following inequality.

\begin{prop}\label{compkc}
Let $M$ be a complex manifold. For any $p,q\in M$ we have $k_M(p,q)\ge c_M(p,q)$.
\end{prop}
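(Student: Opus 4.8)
The plan is to derive the inequality directly from the two definitions by exploiting the distance-decreasing property of holomorphic self-maps of the disc, that is, the Schwarz--Pick lemma. The whole argument consists in feeding a Carath\'eodory competitor (a holomorphic map $f:M\to\Bbb D$) through an arbitrary Kobayashi competitor (a $k$-string) and observing that Poincar\'e lengths can only shrink along the way.

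First I would fix an arbitrary holomorphic map $f:M\longrightarrow \Bbb D$ together with an arbitrary $k$-string from $p$ to $q$: points $p=p_0,\dots,p_k=q$, parameters $a_1,\dots,a_k,b_1,\dots,b_k\in\Bbb D$, and holomorphic maps $f_1,\dots,f_k:\Bbb D\to M$ with $f_i(a_i)=p_{i-1}$ and $f_i(b_i)=p_i$. For each index $i$ the composition $f\circ f_i:\Bbb D\to\Bbb D$ is a holomorphic self-map of the disc, so the Schwarz--Pick lemma yields
\[
d_{\Bbb D}\big(f(p_{i-1}),f(p_i)\big)=d_{\Bbb D}\big(f\circ f_i(a_i),\,f\circ f_i(b_i)\big)\le d_{\Bbb D}(a_i,b_i).
\]
Next I would telescope along the string: the triangle inequality for the Poincar\'e distance on $\Bbb D$ gives
\[
d_{\Bbb D}\big(f(p),f(q)\big)\le\sum_{i=1}^k d_{\Bbb D}\big(f(p_{i-1}),f(p_i)\big)\le \sum_{i=1}^k d_{\Bbb D}(a_i,b_i).
\]
Since the rightmost sum is exactly the quantity whose infimum over all $k$-strings defines $k_M(p,q)$, while the left-hand side is independent of the chosen string, passing to the infimum over strings produces $d_{\Bbb D}\big(f(p),f(q)\big)\le k_M(p,q)$. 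Finally, as $f$ was an arbitrary holomorphic map into $\Bbb D$, taking the supremum over all such $f$ gives $c_M(p,q)\le k_M(p,q)$, which is the claim.

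The argument carries no genuine analytic obstacle: its entire content is the Schwarz--Pick lemma, and the only points requiring care are bookkeeping ones, namely checking that each composition $f\circ f_i$ really is a self-map of $\Bbb D$ (it is, since $f$ takes values in $\Bbb D$) and handling the two quantifiers in the correct order, taking the infimum over strings \emph{before} the supremum over maps $f$. The one structural remark worth flagging is that this is the \emph{easy} direction precisely because both competitors are measured over the single model space $\Bbb D$ with its Poincar\'e distance, so no holomorphic retraction or extension result is needed; this is in sharp contrast with the reverse inequality, which fails in general.
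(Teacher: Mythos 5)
Your proof is correct and is exactly the standard argument: the paper itself does not reproduce a proof but only cites \cite[Proposition 2.1]{SK}, and the reasoning you give (compose a Carath\'eodory competitor $f:M\to\Bbb D$ with each map of a $k$-string, apply the Schwarz--Pick lemma and the triangle inequality, then take the infimum over strings before the supremum over $f$) is precisely the argument found there. Nothing is missing.
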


\noindent The proof of this proposition can be found in \cite[Proposition 2.1]{SK}. An immediate consequence of the previous result is that the Kobayashi pseudodistance $k_M$ is actually a distance as soon as the $c_M$ is a distance on $M$. Since Carath\'eodory pseudodistance between two points in $M$ is given by taking the supremum of all holomorphic mappings $f:M\longrightarrow \Bbb D$, a necessary and sufficient condition for which $c_M$ is a distance is that the family of such mappings separete the points in $M$.

\begin{defn}
Let $M$ be a complex manifold and let $k_M$ be the Kobayashi pseudodistance on $M$. If $k_M$ is a distance, then $M$ is called \emph{hyperbolic manifold}. If $k_M$ is also complete, then $M$ is called \emph{complete hyperbolic manifold}.
\end{defn}

\begin{thm}\label{tehm}
The Teichm\"uller space $\mathcal{T}(S)$ of $S$ is a complete hyperbolic manifold.
\end{thm}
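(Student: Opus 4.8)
The plan is to establish the two assertions separately: first that $\mathcal{T}(S)$ is hyperbolic, i.e. that the Kobayashi pseudodistance $k_{\mathcal{T}(S)}$ is a genuine distance, and then that it is complete.

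For hyperbolicity, I would exploit the Bers' embedding $\mathcal{B}$ recalled above, which identifies $\mathcal{T}(S)\cong\mathcal{T}(\Gamma)$ with a \emph{bounded} domain of $Q(\Bbb L,\Gamma)\cong\C^{3g-3}$, the image lying inside the ball of radius $3/2$ by the Nehari--Kraus estimate. On such a bounded domain the restrictions of the (suitably rescaled) linear coordinate functions of $\C^{3g-3}$ are holomorphic maps into $\Bbb D$, and they manifestly separate points. Hence the family of holomorphic maps $f:\mathcal{T}(S)\longrightarrow\Bbb D$ separates points, so by the criterion recalled after Proposition \ref{compkc} the Carath\'eodory pseudodistance $c_{\mathcal{T}(S)}$ is a distance. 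Invoking Proposition \ref{compkc}, which gives $k_{\mathcal{T}(S)}\ge c_{\mathcal{T}(S)}$, one concludes that $k_{\mathcal{T}(S)}$ is a distance as well; thus $\mathcal{T}(S)$ is a hyperbolic manifold. This half of the argument is entirely soft, relying only on boundedness of the Bers image.

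The harder half is completeness, and here I would appeal to the deep theorem of Royden \cite{RO} asserting that on the Teichm\"uller space the Kobayashi metric coincides with the classical Teichm\"uller metric. Granting this identification, completeness of $k_{\mathcal{T}(S)}$ reduces to the completeness of the Teichm\"uller metric, which is classical: with that metric $\mathcal{T}(S)$ is a proper geodesic space, any two points being joined by a Teichm\"uller geodesic, so Cauchy sequences converge.

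I expect Royden's identification of the two infinitesimal norms to be the genuine obstacle, as it is by no means formal. It rests on realizing the cotangent space to $\mathcal{T}(S)$ at a point $[X]$ as the space $Q(X)$ of holomorphic quadratic differentials equipped with the $L^1$-norm, and on comparing the infinitesimal Kobayashi norm with the Teichm\"uller (co)norm; the two are shown to agree, whence the global distances coincide by integrating along paths. Since a careful treatment of this equality lies outside the expository scope here, I would simply cite \cite{RO} for it and deduce the stated completeness as above.
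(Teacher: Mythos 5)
Your proposal is correct and rests on the same key ingredient as the paper: Royden's identification of the Kobayashi metric with the (complete) Teichm\"uller metric, which is exactly the reduction the paper sketches before citing Royden and Earle--Kra. Your extra soft derivation of hyperbolicity from boundedness of the Bers image, via separation of points by holomorphic maps to $\Bbb D$ and Proposition \ref{compkc}, is a correct and slightly more self-contained way to handle that half, but it does not change the substance of the argument.
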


\noindent This theorem was proven indipendently by Royden in \cite{RO} and Earle-Kra in \cite{EK}. The proof of this theorem can be found also in \cite[Theorem 6.21]{IT}. Roughly speaking, the Kobayashi distance agrees with the Teichm\"uller distance, and since the latter is complete \cite[Theorem 5.4]{IT}, also the Kobayshi distance is complete. As a consequence we have the following result.

\begin{thm}\label{teiss}
The Teichm\"uller space $\mathcal{T}(S)$ of $S$ is a Stein manifold.
\end{thm}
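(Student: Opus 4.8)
The plan is to deduce Steinness from the completeness of the Kobayashi distance recorded in Theorem \ref{tehm}, using the characterizations of domains of holomorphy established earlier in the excerpt. Throughout I work with the Bers realization from the previous subsection: the Bers embedding $\mathcal{B}$ identifies $\mathcal{T}(S) \cong \mathcal{T}(\Gamma)$ biholomorphically with a bounded domain $\Omega \subset Q(\Bbb L,\Gamma) \cong \C^{3g-3}$. Since being Stein is a biholomorphic invariant, it suffices to prove that $\Omega$ is a Stein manifold.

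For an open subset of $\C^{3g-3}$ this reduces to a single analytic property. The coordinate functions of $\C^{3g-3}$ are global holomorphic functions separating the points of $\Omega$, so the first Stein axiom holds automatically; by the Cartan-Thullen Theorem \ref{ct} the second axiom (holomorphic convexity) is equivalent to $\Omega$ being a domain of holomorphy, which by Oka's Theorem \ref{ot} is in turn equivalent to pseudoconvexity of $\Omega$. Thus the whole statement comes down to verifying that $\Omega$ is pseudoconvex.

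This is where Theorem \ref{tehm} enters. Because the Kobayashi distance is a biholomorphic invariant, $k_\Omega$ is a complete distance on $\Omega$. The key implication I would invoke (due to Kobayashi, see \cite{SK}) is that a domain in $\C^n$ whose Kobayashi distance is complete is necessarily pseudoconvex. I would argue by contradiction through the continuity principle built into the definition of pseudoconvexity: if $\Omega$ failed to be pseudoconvex there would be a family $f:\overline{\Bbb D}\times[0,1]\to\C^{3g-3}$, holomorphic in $z$ for each fixed $t$, with $f(z,t)\in\Omega$ except on $\{|z|<1\}$ at $t=1$, yet with $f(0,1)\notin\Omega$. For $t<1$ each slice $f_t:\Bbb D\to\Omega$ is a holomorphic disc, and the distance-decreasing property $k_\Omega\big(f_t(a),f_t(b)\big)\le d_{\Bbb D}(a,b)$ lets me join a fixed interior base point to the centres $f_t(0)$ along paths of uniformly bounded Kobayashi length. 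As $t\to1$ these centres leave every compact subset of $\Omega$ while forming a $k_\Omega$-Cauchy sequence, contradicting completeness. Hence $\Omega$ is pseudoconvex.

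Combining these steps gives the theorem: $\Omega$ is pseudoconvex, hence a domain of holomorphy by Theorem \ref{ot}, hence holomorphically convex by Theorem \ref{ct}, and therefore Stein. The one genuinely nontrivial point is the completeness $\Rightarrow$ pseudoconvexity implication; if one prefers to avoid the hands-on continuity-principle argument, it can be packaged cleanly by observing that a complete hyperbolic manifold is \emph{taut} and that taut domains in $\C^n$ are pseudoconvex, which is the route taken in \cite{SK}.
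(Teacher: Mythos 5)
Your argument is exactly the paper's: Bers' embedding realizes $\mathcal{T}(S)$ as a bounded domain in $\C^{3g-3}$, completeness of the Kobayashi distance (Theorem \ref{tehm}) gives pseudoconvexity via the cited result of Kobayashi (the paper's Lemma \ref{pslem}), and Oka's Theorem \ref{ot} together with Cartan--Thullen \ref{ct} then yields Steinness. Your hands-on continuity-principle sketch of the completeness-implies-pseudoconvexity step is looser than it looks (the bound $k_\Omega(f_t(a),f_t(b))\le d_{\Bbb D}(a,b)$ does not by itself give uniformly bounded distance from a base point to the centres $f_t(0)$, since $f$ is only continuous in $t$), but you correctly flag this as the one nontrivial point and defer to \cite{SK}, which is precisely what the paper does.
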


\noindent Before giving the proof of this theorem, we shall need of the following result whose proof can be found in \cite[Theorem 3.4]{SK}.

\begin{lem}\label{pslem}
If a domain $M$ in $\C^n$ is hyperbolic complete, then it is pseudoconvex.
\end{lem}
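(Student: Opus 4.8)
The plan is to verify pseudoconvexity in the exact geometric sense recalled just before Theorem \ref{ot}: in the terminology of this paper, $M$ is pseudoconvex precisely when every continuous map $f:\overline{\Bbb D}\times[0,1]\to\C^n$, with each $f_t=f(\cdot,t)$ holomorphic on $\Bbb D$ and with $f(z,t)\in M$ except possibly when $|z|<1$ and $t=1$, in fact sends all of $\overline{\Bbb D}\times[0,1]$ into $M$. (By Oka's Theorem \ref{ot} this is equivalent to $M$ being a domain of holomorphy, but we shall not need that here.) So I fix such a family $f$ and reduce the whole lemma to proving the single assertion $f_1(\Bbb D)\subset M$.

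The one property of the Kobayashi pseudodistance that drives the argument is that it is distance-decreasing under holomorphic maps: for any holomorphic $g:\Bbb D\to M$ one has $k_M(g(z),g(w))\le d_{\Bbb D}(z,w)$, directly from the definition of $k_M$ via holomorphic discs. Applied to the members of the family $\{f_t\}_{t<1}$, each of which maps $\overline{\Bbb D}$ into $M$, this shows that the family is uniformly Lipschitz for $k_M$, hence equicontinuous on compact subsets of $\Bbb D$ (recall that on a hyperbolic manifold $k_M$ induces the manifold topology). Now suppose, for contradiction, that $f_1(z_0)\notin M$ for some $z_0\in\Bbb D$, and choose $t_j\to 1^-$, giving a sequence $f_{t_j}:\Bbb D\to M$ converging pointwise to $f_1$. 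Here I would invoke completeness: because $M$ is complete hyperbolic, closed $k_M$-balls are compact, so the standard taut-manifold dichotomy applies to the equicontinuous sequence $\{f_{t_j}\}$ — after passing to a subsequence, it either converges locally uniformly to a holomorphic map $\Bbb D\to M$, or it is compactly divergent, i.e.\ it eventually leaves every compact subset of $M$ uniformly on compact subsets of $\Bbb D$.

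To conclude I rule out compact divergence using the boundary behaviour of $f_1$. Since $f_1$ is continuous on $\overline{\Bbb D}$ and $f_1(\partial\Bbb D)\subset M$ with $M$ open, there is an annular neighbourhood of $\partial\Bbb D$ in $\overline{\Bbb D}$ whose $f_1$-image lies in $M$; picking an interior point $z_1$ in this annulus gives $f_1(z_1)\in M$. As $f_{t_j}(z_1)\to f_1(z_1)\in M$, the values $f_{t_j}(z_1)$ remain in a fixed compact subset of $M$, so $\{f_{t_j}\}$ cannot be compactly divergent. Hence the convergent alternative holds, and the local uniform limit must coincide with $f_1$, forcing $f_1(\Bbb D)\subset M$ and contradicting $f_1(z_0)\notin M$. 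This verifies the continuity principle and with it pseudoconvexity. The main obstacle is the taut-type dichotomy used in the middle paragraph: extracting a locally uniformly convergent (or compactly divergent) subsequence is exactly the step where \emph{completeness} of the Kobayashi metric — not mere hyperbolicity — is indispensable, since it is completeness that makes closed Kobayashi balls compact and thus lets an equicontinuous family of discs with values in a compact set subconverge to a disc lying inside $M$.
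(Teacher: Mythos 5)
Your argument is correct, but note first that the paper itself contains no proof of Lemma \ref{pslem}: it is imported verbatim from Kobayashi (cited as \cite[Theorem 3.4]{SK}), so the right comparison is with the classical argument given there, and yours takes a genuinely different (and heavier) route. You verify the continuity principle exactly as the paper defines pseudoconvexity, and the logic goes through: the distance-decreasing property gives equicontinuity, completeness gives compactness of closed Kobayashi balls, the taut dichotomy yields convergence-or-compact-divergence, divergence is excluded by your boundary-annulus point $z_1$, and the subsequential limit is identified with $f_1$ by pointwise convergence. But be aware of what you are importing: the dichotomy you invoke is Kiernan's theorem (complete hyperbolic implies taut), whose proof is itself an Ascoli--Arzel\`a argument, and the step ``Cauchy-complete implies closed balls compact'' is a nontrivial fact valid because $k_M$ is an inner pseudodistance --- tautness is a property of the target $M$, not something that ``applies to the equicontinuous sequence'' per se, so as stated that sentence slightly overreaches unless you cite Kiernan. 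The classical proof avoids normal families entirely and is more elementary: since $f^{-1}(M)$ is an open neighbourhood of the compact set $\partial\mathbb{D}\times[0,1]$, there is $r<1$ with $f(\{r\le|z|\le1\}\times[0,1])\subset M$; then $A=f(\{|z|=r\}\times[0,1])$ is a compact subset of $M$, and for every $t<1$ and $|z|\le r$ the distance-decreasing property gives $k_M\big(f_t(z),f_t(r)\big)\le d_{\mathbb{D}}(z,r)\le C$ with $C=\sup_{|z|\le r}d_{\mathbb{D}}(z,r)<\infty$, so all the sets $f_t(\overline{\mathbb{D}}_r)$, $t<1$, are trapped in $B=\{p\in M:\, k_M(p,A)\le C\}$, which is compact (hence closed in $\mathbb{C}^n$) by completeness; letting $t\to1^-$ and using continuity of $f$ gives $f_1(\overline{\mathbb{D}}_r)\subset B\subset M$ outright --- no contradiction, no subsequences. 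Both routes use completeness only through compactness of closed balls; yours buys a statement in the modern language of taut families, while the direct trap argument buys brevity and self-containedness.
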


\begin{proof}[Proof of Theorem \ref{teiss}]
Let $X$ be a Riemann surface and let $\Gamma$ its Fuchsian model. Bers' embedding realizes $\mathcal{T}(\Gamma)$ as a bounded domain inside $Q(\Bbb L,\Gamma)\cong \C^{3g-3}$ from which inherits its complex structure. Since $\mathcal{T}(\Gamma)$ is identified with $\mathcal{T}(S)$, the latter can be also taught as a bounded domain in $\C^{3g-3}$. By Theorem \ref{tehm}, the Teichm\"uller space $\mathcal{T}(S)$ of $S$ is a complete hyperbolic manifold, thus is pseudoconvex by Lemma \ref{pslem}. By Oka's theorem \ref{ot}, $\mathcal{T}(S)$ is a domain of holomorphy, thus holomorphically convex by Cartan-Thullen Theorem \ref{ct}. Hence the Teichm\"uller space $\mathcal{T}(S)$ is a Stein manifold.
\end{proof}

\noindent Some final considerations. The Carath\'eodory pseudodistance $c_{\mathcal{T}(S)}$ on $\mathcal{T}(S)$ is a distance. In \cite{EC}, Earle has shown that the $c_{\mathcal{T}(S)}$ is complete on $\mathcal{T}(S)$ and proportional to the Kobayashi distance $k_{\mathcal{T}(S)}$. In \cite{KI}, Kra has studied the connection between the Carath\'eodory distance with the Kobayashi distance showing that they agree on Abelian Teichm\"uller discs in $\mathcal{T}(S)$. This fact led to conjecture that these distances agree on the whole space, but it was shown very recently that this is not the case. This longstanding problem was solved by Markovic in \cite{MV}. Finally, Theorem \ref{teiss} can be derived also from a result by Horstmann in \cite{HH}. Indeed, he has shown that any domain in $\C^n$ which is complete with respect to the Carath\'eodory metric is holomorphically convex.

\subsection{K\"ahlerian metrics on $\mathcal{T}(S)$} \label{km} In this section we briefly recall some facts of those K\"ahler metrics on $\mathcal{T}(S)$ coming from the complex structure of the Teichm\"uller space. 

\SetLabelAlign{center}{\null\hfill\textbf{#1}\hfill\null}
\begin{enumerate}[leftmargin=1.75em, labelwidth=1.3em, align=center, itemsep=\parskip]
\item[\bf 1.] \textbf{Bergman metric:} Bers' embedding realises the Teichmüller space as a domain of holomorphy and hence it also carries a Bergman metric $b_{\mathcal{T}(S)}$ which turns out K\"ahler complete on $\mathcal{T}(S)$. This result is essentially due to Earle and Hahn. Indeed, the latter has proved that the Carath\'eodory metric on a bounded domain of $\C^n$ is bounded from above by the Bergman metric. Instead, Earle has proved that the Carath\'eodory metric $c_{\mathcal{T}(S)}$ on $\mathcal{T}(S)$ is complete. Their results combined togheter imply that $b_{\mathcal{T}(S)}$ is complete. A more recent proof is given by Chen in \cite{CBY}, who proved that the distance induced by $b_{\mathcal{T}(S)}$ is equivalent to the Teichm\"uller distance on $\mathcal{T}(S)$, and the latter is complete, providing a bi-Lipschitz model for $b_{\mathcal{T}(S)}$. We will describe the Bergman distance in more details in \ref{bpd}.\\
\item[\bf 2.] \textbf{Weil-Petersson metric:} A more important metric on the Teichmüller space is given the \emph{Weil-Petersson metric} $h_{WP}$. It can be shown that the cotangent space of $\mathcal{T}(S)$ at any point $X$ is given by the complex Banach space $Q(X)$ of quadratic differentials over $X$. For any pair of quadratic differentials $q_1(z)dz^2$ and $q_2(z) dz^2$ on $X$, the Weil-Petersson product on $Q(X)$ turns out a Hermitian product which is defined as follow: For any $q_1,q_2\in Q(X)$ we set
\[ \langle q_1,q_2\rangle_{WP}=\int_X \big(\text{Im } z\big)^2q_1(z)\overline{q_2(z)}dzd\overline{z}.
\] By duality, this product defines a Hermitian product (also denoted by $\langle \cdot,\cdot\rangle_{WP}$ with abuse of notation) on the tangent space of $\mathcal{T}(S)$ at any point $X$, hence a Hermitian metric $h_{WP}$ on $\mathcal{T}(S)$. Ahlfors in \cite{AL} showed that the Weil-Petersson metric is K\"ahlerian but not complete. Therefore $b_{\mathcal{T}(S)}$ is not equivalent to $h_{WP}$ on $\mathcal{T}(S)$.\\
\item[\bf 3.] \textbf{K\"ahler-Einstein metric:} A Riemannian metric $h$ on a complex manifold is called \emph{Einstein metric} if the Ricci tensor is proportional to the metric; that is the following equation holds: $\text{Ric}=k\cdot h$ for some constant $k$. A K\"ahler–Einstein metric is a Riemannian metric which is both a K\"ahler metric and Einstein metric. Cheng and Yau showed in \cite{CY} the existence of a unique complete K\"ahler–Einstein metric on any bounded domain of the complex space $\C^n$. Since the Teichmüller space is a domain of $\C^{3g-3}$ by Bers' embedding and bounded by Nehari-Kraus' Theorem, it carries a K\"ahler–Einstein metric with constant negative scalar curvature.\\
\item[\bf 4.] \textbf{McMullen metric:} McMullen defined in \cite{McM} a complete K\"ahler metric on $\mathcal{T}(S)$ with bounded sectional curvature which is K\"ahler-hyperbolic. The notion of K\"ahler-hyperbolic manifold was first introduce by Gromov in \cite{GR}.
\end{enumerate}

\begin{rmk}
With the only exception of the Weil-Petersson metric, all K\"ahlerian metrics on $\mathcal{T}(S)$ are quasi-isometrics.
\end{rmk}

\section{Complex Projective Structures}\label{cps}

\noindent A \emph{complex projective structure} $\sigma$ on $S$ is a maximal atlas whose charts take values on the Riemann sphere $\cp$ and transition functions are restrictions of M\"obius transformations.  From now on, the word \emph{complex} will be a blanket assumption, and we refer to these structure only as \emph{projective structures}. Also, we shall treat a projective structure $\sigma$ on $S$ as a surface in its own right for semplicity.

\begin{rmk}
Projective structures can be defined also on surfaces of genus lower than $2$. The sphere has a unique projective structure coming from the identification $\Bbb S^2\cong \cp$ up to isotopy, whereas any projective structure on a torus come from an affine structure. In the sequel we continue to assume that $S$ is a closed surface, connected, oriented with genus at least $2$.
\end{rmk}

\noindent A marked projective structure is a couple $(\sigma,f)$ where $\sigma$ is a projective structure and $f$ is an orientation preserving diffeomorphism $f:S\longrightarrow \sigma$. Two marked structures $(\sigma_1,f)$ and $(\sigma_2,g)$ are considered to be equivalent if there exists a projective isomophism $h:\sigma_1\longrightarrow \sigma_2$ such that $g\circ h \circ f^{-1}:S\longrightarrow S$ is isotopic to the identity. We set $\mathcal{P}(S)$ the set of marked isomorphism classes of projective structures on $S$. 

\subsection{Making $\mathcal{P}(S)$ a topological space} We now describe how to put a topology on the set $\mathcal{P}(S)$. In terms of geometric structures, any projective structure can be seen as a $\big(\cp,\pslc\big)$-structure. Therefore any projective structure is the same as an equivalent class of development-holonomy pair $(\dev,\rho)$, where $\dev:\widetilde{S}\longrightarrow \cp$ is an orientation-preserving smooth map equivariant with respect to a representation $\rho:\pi_1S\longrightarrow \pslc$. Two such a pairs $(\dev_1,\rho_1)$ and $(\dev_2,\rho_2)$ are declared to be equivalent if there exists an element $g\in\pslc$ such that $\dev_2=g\circ\dev_1$ and $\rho_2=g\rho_1g^{-1}$. The set $\mathcal{P}(S)$ can be seen as the quotient space by the action of the group $\textsf{Diff}^+_0(S)$ on the set of equivalent classes developing-holonomy pairs. Giving to the set of developing-holonomy pairs the compact-open topology, the quotient space $\mathcal{P}(S)$ inherits the quotient topology. 

\subsection{Relationship between $\mathcal{P}(S)$ and $\mathcal{T}(S)$} Since M\"obius transformations are holomorphic mappings, any projective structure $\sigma$ on $S$ defines an underlying complex structure making $S$ a Riemann surface. Conversely, by the classical uniformization theory, any Riemann surface $X$ is of the form $\hyp/\Gamma$ where $\Gamma$ is a Fuchsian group. In particular, this endows $X$ with a complex projective structure, namely the one coming from the identification $X\cong \hyp/\Gamma$. 

\begin{defn}\label{def_unif}
Let $X$ be a Riemann surface of genus $g\ge2$. Then we call the \emph{Fuchsian uniformization} of $X$ the natural complex projective structure coming from the quotient $\hyp/\Gamma$.
\end{defn}

\begin{rmk}\label{natcp}
More generally, the natural projective structure on $\cp$ induces a natural projective structure on any open set $U\subset \cp$. If a group $\Gamma$ acts on $U$ freely and properly discontinuously, the quotient surface $U/\Gamma$ inherits a natural projective structure. On the other hand, not every projective structure is of the form $U/\Gamma$: For instance Maskit has produced many examples of projective structures with surjective and non injective developing maps, via a geometric construction known as grafting, which consists in replacing a simple closed curve by an annulus (see \cite{MB}).
\end{rmk}

\noindent If two marked projective structures $(\sigma_1,f)$ and $(\sigma_2,g)$ are related by some projective isomophism $h:\sigma_1\longrightarrow \sigma_2$, then it is an easy matter to check that the underlying Riemann surfaces are related by the same isomorphism. As a consequence, there is a continuous \emph{forgetful map}
\[ \pi:\mathcal{P}(S)\longrightarrow \mathcal{T}(S),
\] where $\mathcal{T}(S)$ is the Teichm\"uller space of $S$, that associates any class of marked projective structures to its class of marked Riemann surfaces. Since the fibre of any Riemann surface $X$ contains the Fuchsian uniformization of $X$, the mapping $\pi$ is surjective. On the other hand the forgetful map fails to be injective. This is mainly due to the fact that isomorphism of projective structures turns out a stronger condition than isomorphism of complex structures. Following Loustau in \cite{LB}, we are going to give a brief description of the fibres. A more constructive and elementary description of the fibre is given also in \cite{DU}.\\
\noindent Let $\mathcal{P}(X)$ denote the fibre of $X$, namely the subset of those marked projective structure having $X$ ad underlying complex structure. For any given pair of structures $\sigma_1$ and $\sigma_2$, the identity map $\text{id}_S:\sigma_1\longrightarrow \sigma_2$ is holomorphic isomorphism but not projective, unless $\sigma_1\simeq\sigma_2$. The Schwarzian derivative $\mathcal{S}(\text{id}_S)$ can be used to measure the failure of $\text{id}_S$ to be projective. Equivalently, the Schwarzian derivative measures the difference between the structures $\sigma_1$ and $\sigma_2$. It can be shown that for any projective structure $\sigma \in \mathcal{P}(X)$ and any quadratic differential $\varphi\in Q(X)$ there exists a projective structure $\sigma_\varphi$ such that $\mathcal{S}\big(\text{id}_S:\sigma\longrightarrow \sigma_\varphi\big)=\varphi$. This is mainly due to by the fact the complex Banach space $Q(X)$ acts freely and transitively on the set $\mathcal{P}(X)$. As a consequence $\mathcal{P}(X)$ is a complex affine space modeled on $Q(X)$. This is known in literature as \emph{Schwarzian parametrization of the fibres}. Any choice of a basepoint $\sigma_o$ gives a well-defined isomorphism $\mathcal{P}(X)\to Q(X)$ such that $\sigma\mapsto \mathcal{S}\big(\text{id}_S:\sigma_o\to\sigma\big)=\varphi$. In the sequel we shall denote $\mathcal{S}\big(\text{id}_S:\sigma_o\to\sigma\big)$ simply as $\sigma-\sigma_o$. Recalling that $Q(X)$ is identified with the cotangent space of $\mathcal{T}(S)$ at $X$, the space $\mathcal{P}(S)$ is an affine holomorphic bundle modeled on the holomorphic cotangent bundle $T^*\mathcal{T}(S)$.

\subsection{The canonical complex structure} In this section we are going to describe how the space $\mathcal{P}(S)$ can be upgraded to a complex manifold of dimension $6g-6$, where $g$ denotes as usual the genus of $S$. As a consequence of the previous section, the moduli space of projective structures $\mathcal{P}(S)$ can be identified with the cotangent bundle of the Teichm\"uller space $T^*\mathcal{T}(S)$ by choosing a zero-section $s:\mathcal{T}(S)\to\mathcal{P}(S)$. Let us be more precisely: Any zero-section $s$ yields an isomorphism of complex affine bundles by using the Schwarzian parametrization of the fibre in the following way
\[
\sigma  \longmapsto \Big(\pi(\sigma), \sigma-s\big(\pi(\sigma)\big)\Big)
\] 
\noindent The cotangent space $T^*\mathcal{T}(S)$ is a complex manifold of dimension $6g-6$ and its complex structure can pulled back to define a complex structure on $\mathcal{P}(S)$. Different sections $s_1$ and $s_2$ produce different complex structures which are actually the same if and only if the different $s_1-s_2$ is a holomorphic section of $T^*\mathcal{T}(S)$.\\

\noindent An important class of sections is given by \emph{Bers sections}. By Bers' simultaneous uniformization, given $X,Y\in \mathcal{T}(S)$ there exists a discrete subgroup $Q(X,Y)$ of $\pslc$ that uniformizes $X$ and $Y$ simultaneously.\\
\noindent In order to making a comparison with section \ref{simunif}, let $f:Y\longrightarrow X$ be a quasiconformal map and let $\mu\in B(\hyp,\Gamma)_1$ be the Beltrami coefficient of its preferred lift, where $\Gamma$ is the Fuchsian model of $Y$. The extension of $\mu$ to the whole complex plane defines a quasiconformal map $w_\mu$ and then a quasi-Fuchsian group $Q(X,Y)=\Gamma_\mu$ that uniformizes $X$ and $Y^*$ simultaneously (where $Y^*$ is the mirror image of $Y$ on the lower half-plane). The open set $w_\mu(\hyp)$ is invariant by the action of $Q(X,Y)$ which acts freely and properly discontinuously. By remark \ref{natcp}, the quotient surface $X\simeq w_\mu(\hyp)/Q(X,Y)$ inherits a natural projective structure which we denote by $s_Y(X)$. Notice that the underlying Riemann of $s_Y(X)$ is $X$, so for any fixed $Y\in \mathcal{T}(S)$ we can define the Bers section as
\[ s_Y:\mathcal{T}(S)\longrightarrow \mathcal{P}(S)
\]
\[ \qquad X\longmapsto s_Y(X)
\]
\noindent It can be shown that every Bers section induces the same complex structure on $\mathcal{P}(S)$. We will refer to that structure as  \emph{canonical complex structure} on $\mathcal{P}(S)$.

\section{Distances on $\mathcal{P}(S)$}\label{dist}

\noindent In this last section we are going to show the existence of exotic hermitian metrics on $\mathcal{P}(S)$ that extend the classical known metrics on $\mathcal{T}(S)$.

\subsection{Exotic metrics on $\mathcal{P}(S)$} We have seen in \ref{cps} that any section induces an identification between $\mathcal{P}(S)$ and the cotangent bundle $T^*\mathcal{T}(S)$ given by the Schwarzian parametrization. This identification can be used to transport the natural complex structure on $T^*\mathcal{T}(S)$ to the moduli space $\mathcal{P}(S)$, namely the latter is endowed with the unique complex structure that makes the identification a biholomorphism. Recall that any Bers' section $s_Y$, where $Y\in\mathcal{T}(S)$, induces the natural complex structure. The cotangent space $T^*\mathcal{T}(S)$ is a complex vector bundle of complex rank $3g-3$ over the Teichm\"uller space which is known to be a Stein manifold by \ref{teiss}. Since $\mathcal{T}(S)$ is contractible (it is indeed homeomorphic to $\R^{6g-6}$), any complex vector bundle is topologically trivial, that means the existence of a homeomorphism between $T^*\mathcal{T}(S)$ and $\mathcal{T}(S)\times \C^{3g-3}$. By Theorem \ref{topeqholo}; two vector bundles over a Stein base are holomorphically trivial if and only if they are topologically trivial. Hence $T^*\mathcal{T}(S)$ and $\mathcal{T}(S)\times \C^{3g-3}$ endowed with their canonical complex structures are actually biholomorphic. In particular, we can deduce that $\mathcal{P}(S)$ with its canonical complex structure is biholomorphic to $\mathcal{T}(S)\times \C^{3g-3}$. Let us denote by $h_\bullet$ one of the following metrics $\{ b_{\mathcal{T}(S)}, h_{WP}, h_{KE}, h_{MM}\}$. Then we have the following theorem.

\begin{thm}\label{mt}
Let $S$ be a closed surface of genus $g\ge2$, and let $\mathcal{P}(S)$ be the moduli space of complex projective structure on $S$ endowed with the natural complex structure. Then there exists a hermitian metric on $\mathcal{P}(S)$ that extends the metric $h_\bullet$ on $\mathcal{T}(S)$. In particular, this metrics turn out K\"ahler complete unless $h_\bullet$ is the Weil-Petersson metric.
\end{thm}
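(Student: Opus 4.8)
The plan is to exploit the biholomorphism $\mathcal{P}(S)\cong\mathcal{T}(S)\times\C^{3g-3}$ established just above the statement and simply equip the right-hand factor with a product metric. Concretely, I would fix a section (for instance a Bers section $s_Y$, or the Fuchsian section of Definition \ref{def_unif}) and use the induced identification $\sigma\mapsto(\pi(\sigma),\sigma-s(\pi(\sigma)))$; under the Oka--Grauert trivialization of $T^*\mathcal{T}(S)$ this carries the image $s(\mathcal{T}(S))$ onto the slice $\mathcal{T}(S)\times\{0\}$, because a holomorphic vector bundle isomorphism preserves zero sections. On $\mathcal{T}(S)\times\C^{3g-3}$ I would then define the hermitian metric $g=\pi_1^*h_\bullet+\pi_2^*h_{\mathrm{euc}}$, where $\pi_1,\pi_2$ are the two projections and $h_{\mathrm{euc}}$ is the standard flat hermitian metric on $\C^{3g-3}$, and pull $g$ back to $\mathcal{P}(S)$.

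For the extension claim I would observe that the inclusion $\iota\colon X\mapsto(X,0)$ of $\mathcal{T}(S)$ as the zero slice is a holomorphic embedding, and that $\iota^*g=h_\bullet$ by construction, since the $h_{\mathrm{euc}}$ term annihilates all vectors tangent to the slice. Transporting back through the chosen section, the restriction of the metric on $\mathcal{P}(S)$ to the embedded copy $s(\mathcal{T}(S))$ equals $h_\bullet$, which is precisely the sense in which the new metric extends $h_\bullet$.

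It then remains to verify the K\"ahler and completeness properties. Each of the four candidate metrics $b_{\mathcal{T}(S)},h_{WP},h_{KE},h_{MM}$ is K\"ahler, and the flat metric $h_{\mathrm{euc}}$ is K\"ahler; since the K\"ahler form of the product metric is $\pi_1^*\omega_\bullet+\pi_2^*\omega_{\mathrm{euc}}$, whose exterior derivative is the sum of the pullbacks of $d\omega_\bullet=0$ and $d\omega_{\mathrm{euc}}=0$, the metric $g$ is K\"ahler in every case. For completeness, the distance induced by a Riemannian product dominates the distance in each factor and is dominated by their sum, so $g$ is complete if and only if both factors are; as $(\C^{3g-3},h_{\mathrm{euc}})$ is complete, $g$ is complete exactly when $h_\bullet$ is. This holds for the Bergman, K\"ahler--Einstein and McMullen metrics, which are complete by the results recalled in Section \ref{km}, but fails for the Weil--Petersson metric, which is incomplete by Ahlfors' theorem: a non-convergent Cauchy sequence $X_n$ in $(\mathcal{T}(S),h_{WP})$ produces the non-convergent Cauchy sequence $(X_n,0)$ for $g$. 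The construction is essentially formal once the biholomorphism is in hand, and the only point I would take care to check is that the holomorphic trivialization furnished by Oka--Grauert (Theorem \ref{topeqholo}) genuinely fixes the zero slice, so that the restriction of $g$ recovers $h_\bullet$ on the nose rather than a pulled-back deformation of it; everything else reduces to the standard behaviour of product K\"ahler metrics and product distances.
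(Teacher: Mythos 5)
Your proposal is correct and follows essentially the same route as the paper: identify $\mathcal{P}(S)$ with $\mathcal{T}(S)\times\C^{3g-3}$ via the Schwarzian parametrization and the Oka--Grauert trivialization, endow the product with $h_\bullet$ times a metric on the Euclidean factor, and pull back. If anything you are slightly more careful than the paper, which takes an arbitrary hermitian metric $h_g$ on $\C^{3g-3}$ and then asserts the product is K\"ahler and complete --- claims that actually require choosing $h_g$ K\"ahler and complete, as your explicit choice of the flat metric $h_{\mathrm{euc}}$ does, and your attention to the trivialization fixing the zero slice makes the ``extension'' claim precise where the paper leaves it implicit.
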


\begin{proof}[Proof of \ref{mt}]
Let $h_\bullet$ be one of the metrics we are considering on $\mathcal{T}(S)$. Let $h_{g}$ be any hermitian metric on $\C^{3g-3}$. The product metric $h_\bullet\times h_g$ defines an hermitian metric on $\mathcal{T}(S)\times \C^{3g-3}$ which can be transported to a hermitian metric on $T^*\mathcal{T}(S)$ and then on $\mathcal{P}(S)$ via the identification given by the Schwarzian parametrization. Since both $h_\bullet$ and $h_g$ are K\"ahler, the product turns out K\"ahler. Finally, since the Weil-Petersson metric on $\mathcal{T}(S)$ is not complete we have that $h_{WP}\times h_g$ is not K\"ahler-complete. In all other cases, the metric $h_\bullet$ is K\"ahler-complete, hence the product metric is K\"ahler-complete.
\end{proof}

\noindent The following corollary is a straighforward consequence of the previous theorems.

\begin{cor}
The Weil-Petersson metric is not equivalent to any other metrics defined on $\mathcal{P}(S)$.
\end{cor}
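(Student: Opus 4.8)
The plan is to exploit metric completeness as the distinguishing invariant. First I would make precise that two Hermitian metrics on $\mathcal{P}(S)$ are \emph{equivalent} when they are quasi-isometric, that is bi-Lipschitz equivalent as length metrics; this is the same notion under which the Remark closing Section \ref{km} groups the four classical metrics on $\mathcal{T}(S)$ into the single quasi-isometry class $\{b_{\mathcal{T}(S)}, h_{KE}, h_{MM}\}$, with $h_{WP}$ standing apart. The corollary should be read as the assertion that, among the four canonical extensions to $\mathcal{P}(S)$ produced in Theorem \ref{mt}, the Weil-Petersson extension is equivalent to none of the other three.

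The key observation is that \emph{completeness is a quasi-isometry invariant}. If two distances $d_1$ and $d_2$ satisfy $C^{-1} d_1 \le d_2 \le C\, d_1$ for some $C\ge 1$, then a sequence is $d_1$-Cauchy exactly when it is $d_2$-Cauchy and the two distances induce the same topology; hence $(\mathcal{P}(S), d_1)$ is complete if and only if $(\mathcal{P}(S), d_2)$ is. I would record this elementary fact first, since it is the only genuine link in the argument.

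Now I would invoke Theorem \ref{mt} directly. By that theorem the product extensions of $b_{\mathcal{T}(S)}$, $h_{KE}$ and $h_{MM}$, each obtained as $h_\bullet \times h_g$ after transport through the Schwarzian parametrization, are K\"ahler complete, whereas the extension $h_{WP} \times h_g$ fails to be complete: $h_{WP}$ is already incomplete on $\mathcal{T}(S)$ by Ahlfors' theorem, and a Riemannian product of an incomplete factor with any other factor is incomplete (a non-convergent Cauchy sequence in the first factor, paired with a fixed point in the second, is a non-convergent Cauchy sequence in the product). Combining this completeness dichotomy with the invariance above: were the extended Weil-Petersson metric equivalent to any of the other three extensions it would inherit their completeness, contradicting its incompleteness. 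Hence the extended Weil-Petersson metric occupies its own equivalence class, which is precisely the statement of the corollary.

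The only substantive input is the completeness dichotomy, and that is handed to us by Theorem \ref{mt}; the remainder is routine. I do not expect a real obstacle here, but one subtlety worth flagging is the scope of the phrase ``any other metrics defined on $\mathcal{P}(S)$'': without fixing the admissible class, no metric can be separated from \emph{every} other one, since one could always build a metric quasi-isometric to $h_{WP}\times h_g$. I would therefore keep the statement restricted to the four canonical extensions, for which the completeness argument is decisive.
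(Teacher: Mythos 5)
Your proposal is correct and matches the paper's intended argument: the paper gives no explicit proof, calling the corollary a ``straightforward consequence of the previous theorems,'' and the implicit reasoning is exactly the completeness dichotomy from Theorem \ref{mt} together with the fact that bi-Lipschitz equivalent metrics have the same Cauchy sequences (the same device the paper already uses on $\mathcal{T}(S)$ to conclude $b_{\mathcal{T}(S)}\not\sim h_{WP}$). Your added precisions --- that ``equivalent'' must mean bi-Lipschitz comparable rather than coarse quasi-isometry, and that the statement only makes sense restricted to the canonical extensions --- are fair clarifications of a loosely worded corollary, not deviations from the paper's route.
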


\noindent Some comments. The metrics we have defined on $\mathcal{P}(S)$ do not preserve in general the type of metric defined on the $\mathcal{T}(S)$. For instance, there exist complete metrics on $\mathcal{P}(S)$ that extend the complete Bergman metric $b_{\mathcal{T}(S)}$ on $\mathcal{T}(S)$ without being a Bergman metric on $\mathcal{P}(S)$ in general: Indeed if a Bergman metric exists it will be unique. We will back on the Bergman pseudodistance on $\mathcal{P}(S)$ in the last paragraph \ref{bpd}.\\
\noindent A similar discussion can be made for those metrics that extend the K\"ahler-Einstein metric $h_{KE}$ on $\mathcal{T}(S)$. In \cite{CY}, the Authors claim that bounded pseudoconvex domains in $\C^n$ with $\mathcal{C}^2$ boundary always admit a unique K\"ahler-Einstein metric. As we have pointed out above, $\mathcal{P}(S)$ is unbounded, hence we have no guarantee on the existence of a K\"ahler-Einstein metric without using other arguments. The case of those metrics that extend the McMullen metric on $\mathcal{T}(S)$ is different and we postpone the discussion in the next section.

\subsection{The Kobayashi and Carath\'eodory pseudodistances on $\mathcal{P}(S)$} In this paragraph we are going to consider the Kobayashi and Carath\'eodory pseudodistances on the moduli space of projective structure on $S$. Indeed, since $\mathcal{P}(S)$ is a complex manifold, both pseudodistances can be defined. Surprisingly, we have the following result.

\begin{thm}\label{kcnotcom}
Both Kobayashi and Carath\'eodory pseudodistances on $\mathcal{P}(S)$ can not be upgraded to a distance.
\end{thm}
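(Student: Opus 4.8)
The plan is to exploit the biholomorphism $\mathcal{P}(S)\cong \mathcal{T}(S)\times\C^{3g-3}$ established in Section \ref{dist}, together with the single formal property shared by both pseudodistances: they are non-increasing under holomorphic maps. Recall that for any holomorphic map $F:M\longrightarrow N$ of complex manifolds one has $k_N\big(F(p),F(q)\big)\le k_M(p,q)$ and $c_N\big(F(p),F(q)\big)\le c_M(p,q)$ for all $p,q\in M$. For the Carath\'eodory pseudodistance this is immediate from the definition, since every holomorphic $h:N\longrightarrow \Bbb D$ pulls back to a holomorphic $h\circ F:M\longrightarrow \Bbb D$, so the supremum defining $c_N\big(F(p),F(q)\big)$ runs over a smaller family; for the Kobayashi pseudodistance it follows by post-composing the disks of a $k$-string in $M$ with $F$.

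First I would fix a base point $X_0\in\mathcal{T}(S)$ and consider the holomorphic inclusion of the fibre
\[ \iota:\C^{3g-3}\longrightarrow \mathcal{P}(S),\qquad z\longmapsto (X_0,z), \]
where the target is read through the biholomorphism $\mathcal{P}(S)\cong\mathcal{T}(S)\times\C^{3g-3}$. The map $\iota$ is manifestly holomorphic, being the composition of the holomorphic slice $z\mapsto(X_0,z)$ with that biholomorphism. Next, I would invoke the fact recorded in Section \ref{kcm} that on $\C^n$ both pseudodistances vanish identically, $k_{\C^n}\equiv c_{\C^n}\equiv 0$. Applying the distance-decreasing property to $\iota$, for any two distinct points $z_1\neq z_2$ in $\C^{3g-3}$ one gets
\[ 0\le k_{\mathcal{P}(S)}\big(\iota(z_1),\iota(z_2)\big)\le k_{\C^{3g-3}}(z_1,z_2)=0, \]
and verbatim the same chain of inequalities for $c_{\mathcal{P}(S)}$. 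Hence $k_{\mathcal{P}(S)}$ and $c_{\mathcal{P}(S)}$ both vanish on the pair of distinct points $\iota(z_1)\neq\iota(z_2)$ of $\mathcal{P}(S)$, which violates the positivity axiom required of a genuine distance. This proves that neither pseudodistance can be upgraded to a distance.

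There is no serious obstacle once the biholomorphism $\mathcal{P}(S)\cong\mathcal{T}(S)\times\C^{3g-3}$ is in hand; the only points deserving care are the justification that both pseudodistances are distance-decreasing under holomorphic maps and that they collapse on the $\C^{3g-3}$ factor. The phenomenon is thus entirely due to the affine fibres: in sharp contrast with $\mathcal{T}(S)$, where $k_{\mathcal{T}(S)}$ and $c_{\mathcal{T}(S)}$ are honest (indeed complete) distances by Theorem \ref{tehm} and the ensuing discussion, the presence of a whole copy of $\C^{3g-3}$ inside $\mathcal{P}(S)$ is exactly what destroys the separation of points. I would remark that the argument in fact yields the slightly stronger statement that $k_{\mathcal{P}(S)}$ and $c_{\mathcal{P}(S)}$ are constant along each fibre of the forgetful map $\pi:\mathcal{P}(S)\longrightarrow\mathcal{T}(S)$, being pulled back from the corresponding pseudodistances on $\mathcal{T}(S)$.
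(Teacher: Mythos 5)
Your proof is correct and follows essentially the same route as the paper: both arguments reduce to the biholomorphism $\mathcal{P}(S)\cong\mathcal{T}(S)\times\C^{3g-3}$ and the vanishing of $k_{\C^n}$ and $c_{\C^n}$, the only difference being that you apply the distance-decreasing property directly to the holomorphic fibre inclusion $\iota:\C^{3g-3}\to\mathcal{P}(S)$, whereas the paper first proves a two-sided product inequality for $k_{M\times N}$ and $c_{M\times N}$ and then specializes. Note only that your closing remark, that $k_{\mathcal{P}(S)}$ and $c_{\mathcal{P}(S)}$ are pulled back from $\mathcal{T}(S)$, needs the full two-sided estimate (i.e.\ the paper's proposition), not just the upper bound used in your main argument.
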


\noindent The proof of this theorem is a straighforward consequence of the following proposition that we state in full generality.

\begin{prop}
Let $M$ and $N$ two complex manifolds. Then the Kobayashi and Carath\'eodory pseudodistances both satisfy the following chain of inequality.
\[ k_M(p_1,p_2)+k_N(q_1,q_2)\ge k_{M\times N}\big((p_1,q_1),(p_2,q_2)\big)\ge \max\{k_M(p_1,p_2),k_N(q_1,q_2)\},
\]
\[ c_M(p_1,p_2)+c_N(q_1,q_2)\ge c_{M\times N}\big((p_1,q_1),(p_2,q_2)\big)\ge \max\{c_M(p_1,p_2),c_N(q_1,q_2)\},
\] for any $p_1,p_2\in M$ and for any $q_1,q_2\in N$.
\end{prop}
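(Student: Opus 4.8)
The plan is to reduce both chains of inequalities to the single fundamental fact that the Kobayashi and Carath\'eodory pseudodistances are \emph{distance-decreasing} under holomorphic maps: for any holomorphic $\phi:A\longrightarrow B$ between complex manifolds one has $k_B(\phi(a),\phi(a'))\le k_A(a,a')$ and $c_B(\phi(a),\phi(a'))\le c_A(a,a')$ for all $a,a'\in A$. First I would verify this directly from the definitions recalled above. For the Kobayashi pseudodistance, any $k$-string joining $a$ to $a'$ in $A$, built from disks $f_i:\Bbb D\longrightarrow A$ with chosen points $a_i,b_i$, is pushed forward by $\phi$ to the $k$-string built from $\phi\circ f_i:\Bbb D\longrightarrow B$ with the \emph{same} points $a_i,b_i$, joining $\phi(a)$ to $\phi(a')$; passing to the infimum gives the inequality. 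For the Carath\'eodory pseudodistance, precomposition $g\mapsto g\circ\phi$ embeds the admissible family $\{g:B\to\Bbb D\}$ into the admissible family $\{h:A\to\Bbb D\}$, so the supremum defining $c_B(\phi(a),\phi(a'))$ is dominated by the one defining $c_A(a,a')$.

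For the right-hand (lower bound) inequality I would apply this to the two coordinate projections $\pi_M:M\times N\to M$ and $\pi_N:M\times N\to N$, which are holomorphic. Distance-decreasing then yields $k_M(p_1,p_2)\le k_{M\times N}\big((p_1,q_1),(p_2,q_2)\big)$ and $k_N(q_1,q_2)\le k_{M\times N}\big((p_1,q_1),(p_2,q_2)\big)$, and taking the maximum gives the stated bound; the Carath\'eodory case is identical with $c$ in place of $k$.

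For the left-hand (upper bound) inequality I would instead use the holomorphic slice inclusions $\iota:x\mapsto(x,q_1)$ of $M$ into $M\times N$ and $\jmath:y\mapsto(p_2,y)$ of $N$ into $M\times N$, combined with the triangle inequality routed through the intermediate point $(p_2,q_1)$. Concretely,
\[
k_{M\times N}\big((p_1,q_1),(p_2,q_2)\big)\le k_{M\times N}\big((p_1,q_1),(p_2,q_1)\big)+k_{M\times N}\big((p_2,q_1),(p_2,q_2)\big),
\]
and distance-decreasing applied to $\iota$ and $\jmath$ bounds the two summands by $k_M(p_1,p_2)$ and $k_N(q_1,q_2)$ respectively. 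The same argument works verbatim for $c$, using that $\iota$ and $\jmath$ are holomorphic and that $c$ satisfies the triangle inequality.

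There is no serious obstacle here: the only point demanding any care is the (standard) verification of the distance-decreasing property, which is the single place where the Kobayashi and Carath\'eodory definitions require slightly different bookkeeping, namely pushing forward holomorphic disks in the first case versus precomposing test functions into $\Bbb D$ in the second. Once that is established, both displayed chains follow immediately from the functoriality of the projections, the functoriality of the slice inclusions, and the triangle inequality, and Theorem \ref{kcnotcom} then drops out by combining this proposition with the biholomorphism $\mathcal{P}(S)\cong\mathcal{T}(S)\times\C^{3g-3}$ and the fact that $k_{\C^{3g-3}}\equiv c_{\C^{3g-3}}\equiv 0$.
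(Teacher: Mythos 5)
Your proof is correct and follows essentially the same route as the paper's: the lower bound via the holomorphic projections being distance-decreasing, and the upper bound via the slice inclusions $x\mapsto(x,q_1)$, $y\mapsto(p_2,y)$ combined with the triangle inequality through the intermediate point $(p_2,q_1)$. The only difference is cosmetic: you verify the distance-decreasing property directly from the definitions, whereas the paper simply invokes it (as the Pick--Schwarz lemma).
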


\begin{proof}
Consider first the Kobayashi pseudodistance. We have that
\[ k_M(p_1,p_2)+k_N(q_1,q_2)\ge k_{M\times N}\big((p_1,q_1),(p_2,q_1)\big)+k_{M\times N}\big((p_2,q_1),(p_2,q_2)\big)\ge k_{M\times N}\big((p_1,q_1),(p_2,q_2)\big)
\] where the first inequality follows from the fact the mappings $f(p)=(p,q_1)$ and $g(q)=(p_2,q)$ are distance decreasing by the Pick-Schwarz lemma and the second inequality come from the triangle inequality. Finally the inequality
\[ k_{M\times N}\big((p_1,q_1),(p_2,q_2)\big)\ge \max\{k_M(p_1,p_2),k_N(q_1,q_2)\}
\] follows from the fact that the projections on both pieces are distance decreasing again by the Pick-Schwarz lemma. The same proof works replacing $k_M$ and $k_N$ with $c_M$ and $c_N$ respectively.
\end{proof}

\begin{proof}[Proof of Theorem \ref{kcnotcom}]
We argue by contradiction. By the proposition above, the Kobayashi pseudodistance on $\mathcal{T}(S)\times \C^{3g-3}$ is bounded from above by the sum of the Kobayashi pseudodistances $k_{\mathcal{T}(S)}$ and $k_{\C^n}$, and bounded from below by the maximum of them. Since $k_{\C^n}\equiv 0$, we get 
\[ k_{\mathcal{T}(S)\times \C^{3g-3}}\big((X,v),(Y,w)\big)=k_{\mathcal{T}(S)}(X,Y), 
\] for any $X,Y\in \mathcal{T}(S)$ and for any $v,w\in \C^n$. Since $k_{\mathcal{T}(S)\times \C^{3g-3}}\big((X,v),(X,w)\big)=0$ for any $v,w$ we  easily deduce that it is not a distance. Let $k_{\mathcal{P}(S)}$ be the Kobayashi pseudodistance on $\mathcal{P}(S)$, suppose it is a distance. Then the Schwarzian identification pulls-back this distance to a Kobayashi distance on $\mathcal{T}(S)\times \C^{3g-3}$, hence we get the desire contradiction. The same result follows for the Carath\'eodory pseudodistance applying \ref{compkc}. Equivalently, the same argument works for $c_{\mathcal{T}(S)\times \C^{3g-3}}$.
\end{proof}

\noindent By Theorem \ref{kcnotcom}, the moduli space $\mathcal{P}(S)$ is not a hyperbolic manifold in the sense of Kobayashi. 

\begin{rmk}
As a final remark we consider again the K\"ahler-hyperbolic metric $h_{MM}$ on $\mathcal{T}(S)$ and its extensions on $\mathcal{P}(S)$. Like in the case of $h_{KE}$, it would be interesting to know whether a K\"ahler-hyperbolic metric on $\mathcal{P}(S)$ exists or not. In \cite{GR}, Gromov showed that the notions of K\"ahler-hyperbolicity and Kobayashi-hyperbolicity are related in the compact case in the following way
\[ \text{K\"ahler-hyperbolicity } \Longrightarrow \text{ Kobayashi-hyperbolicity}
\] However, this implication does not hold in non-compact case. It would be interesting to understand if it holds in our situation. In such a case, by theorem \ref{kcnotcom} we can deduce that a K\"ahler-hyperbolic metric on $\mathcal{P}(S)$ does not exists.
\end{rmk}

\subsection{The Bergman pseudometric on $\mathcal{P}(S)$}\label{bpd}
Another question of major interest is whether the Bergman metric on $\mathcal{P}(S)$ exists or not and whether such a metric extends the Bergman metric on $\mathcal{T}(S)$. Since the moduli space of projective structures is biholomorphic to the product $\mathcal{T}(S)\times \C^{3g-3}$, it can be view as an unbounded domain inside $\C^{6g-6}$. A Bergman pseudometric on a domain $\Omega$ can be defined as soon as the Hilbert space of square integrable holomorphic functions is ample in some sense. Supposing that a Bergman pseudometric is defined, then it is classical in literature that for bounded domains the Bergman pseudometric is always a honest metric, but the same does not hold in the unbounded case. In this section we shall prove the following.

\begin{thm}\label{bnotcom}
The moduli space $\mathcal{P}(S)$ does not carry a Bergman metric.
\end{thm}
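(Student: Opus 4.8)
The plan is to exploit the biholomorphism $\mathcal{P}(S)\cong\mathcal{T}(S)\times\C^{3g-3}$ established earlier and reduce everything to the presence of a Euclidean factor. Recall that the Bergman pseudometric on a domain $\Omega\subset\C^N$ is built from the Bergman kernel, which in turn is assembled from an orthonormal basis of the Hilbert space $A^2(\Omega)$ of square-integrable holomorphic functions on $\Omega$ (with respect to Lebesgue measure). The key structural fact I would invoke is that the Bergman kernel of a product domain factorises: if $\Omega=\Omega_1\times\Omega_2$ then $K_\Omega\big((z_1,z_2),(w_1,w_2)\big)=K_{\Omega_1}(z_1,w_1)\,K_{\Omega_2}(z_2,w_2)$, and consequently the Bergman pseudometric tensor splits as a direct sum of the pseudometrics pulled back from the two factors. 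This reduces the question to understanding the Bergman space of the factor $\C^{3g-3}$.

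First I would make precise what ``does not carry a Bergman metric'' means: either the Hilbert space $A^2(\mathcal{P}(S))$ is too small to define the kernel as a genuine metric, or the resulting pseudometric degenerates in some direction. I would argue the latter. The decisive observation is that the Euclidean space $\C^n$ (here $n=3g-3$) carries \emph{no} nonzero square-integrable holomorphic functions: any entire function $f$ on $\C^n$ with $\int_{\C^n}|f|^2<\infty$ must vanish identically, since the mean-value property forces the value at each point to be controlled by an $L^2$-average over balls of arbitrarily large radius, which tends to zero. Hence $A^2(\C^{3g-3})=\{0\}$, its Bergman kernel is identically zero, and the Bergman pseudometric on the $\C^{3g-3}$ factor is completely degenerate. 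Via the product factorisation of the kernel, this degeneracy propagates: the Bergman pseudometric on $\mathcal{T}(S)\times\C^{3g-3}$ vanishes on every tangent direction lying in the $\C^{3g-3}$ factor, so it is at best a pseudometric and never a metric.

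The cleanest way to organise the argument is therefore: (i) transport the question to $\mathcal{T}(S)\times\C^{3g-3}$ using the biholomorphism of the previous subsection, recalling that the Bergman pseudometric is a biholomorphic invariant; (ii) invoke the product formula for the Bergman kernel; (iii) prove $A^2(\C^{3g-3})=\{0\}$ by the $L^2$ mean-value estimate above; (iv) conclude that the kernel, and hence the pseudometric, degenerates identically along the fibre directions, so no honest metric can exist. One must also note that even on the $\mathcal{T}(S)$ factor the Bergman metric is a genuine metric (this was recorded earlier), which shows the failure is entirely attributable to the unbounded Euclidean factor and is not an artefact of a globally trivial Bergman space.

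The main obstacle is step (iii) combined with the bookkeeping of what the product formula yields for the \emph{pseudometric} as opposed to the kernel. One must be careful that the Bergman pseudometric is defined through second logarithmic derivatives of $\log K_\Omega(z,z)$, so a vanishing kernel on a factor is a degenerate rather than merely ill-defined situation, and the statement should be phrased as: the pseudometric exists (possibly as the pull-back from the $\mathcal{T}(S)$ factor alone) but fails to be positive-definite, hence is not a distance. I would take care to state explicitly that the claim is nondegeneracy failure in the fibre directions, since this is exactly the phenomenon that distinguishes the unbounded case from the classical bounded-domain case alluded to just before the theorem.
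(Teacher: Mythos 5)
Your argument is correct and is essentially the paper's own proof: reduce via the biholomorphism to $\mathcal{T}(S)\times\C^{3g-3}$, apply the product formula (the paper invokes it for the metric in Proposition \ref{pbm}, you for the kernel, which amounts to the same thing), and use $A^2(\C^{3g-3})=\{0\}$. The one imprecision is your closing suggestion that the pseudometric ``exists as the pull-back from the $\mathcal{T}(S)$ factor alone'' and merely degenerates: since the product kernel is $K_{\mathcal{T}(S)}\cdot K_{\C^{3g-3}}\equiv 0$, the quantity $\log k(z,z)$ is undefined everywhere and no Bergman pseudometric is defined on $\mathcal{P}(S)$ at all --- which is what the paper concludes and which still gives the theorem.
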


\noindent Before going to show this theorem, we recall some basic facts about the Bergman metric for a general domain $\Omega\subset \C^n$. Let 
$\Omega$ be a domain inside $\C^n$, and consider the Hilbert space $L^{2,h}(\Omega)$ of all square integrable holomorphic functions on $\Omega$. Any holomorphic square integrable function $f$ on $\Omega$ satisfies the estimate 
\begin{equation}\label{ineq}
 \sup_{K} |f(z)|\le C_K ||f||_{L^2(\Omega)}
\end{equation}
for any compact set $K$ inside $\Omega$. Inequality \ref{ineq} implies that for each $z\in\Omega$ the evaluation map
\[ \textsf{ev}_z: L^{2,h}(\Omega)\longrightarrow \C, \quad f\longmapsto f(z)
\] is a continuous linear functional on $L^{2,h}(\Omega)$. By Riesz representation theorem, this functional can be represented as the inner product with an element $\eta_z\in L^{2,h}(\Omega)$, which depends on $z$, so that
\[ \textsf{ev}_z(f)=\langle f, \eta_z\rangle=\int_\Omega f(\xi)\overline{\eta_z(\xi)}d\mu(\xi).
\] The Bergman kernel $k$ is defined as $k(z,\xi)=\overline{\eta_z(\xi)}$. Assume $k(z,z)>0$ for every $z\in\Omega$, that is, at every $z$ there exists a function $f\in L^{2,h}(\Omega)$ such that $f(z)\neq0$. In particular the quantity $\log k(z,z)$ is well-defined. The Bergman pseudometric $b_\Omega$ on $\Omega$ is defined as
\[ b_\Omega=2\sum_{i=1}^n a_{ij}dz^id\overline{z}^j \quad \text{ where } a_{ij}=\frac{\partial^2}{\partial z^i \partial \overline{z}^j}\log k(z,z).
\] The Bergman pseudometric is actually a honest metric as soon as the domain $\Omega$ is bounded (see for instance \cite{SK2}). Indeed: if $\Omega$ is bounded, the space $L^{2,h}(\Omega)$ contains all polynomial functions and then $\Omega$ admits a honest Bergman metric. A couple of remarks.

\SetLabelAlign{center}{\null\hfill\textbf{#1}\hfill\null}
\begin{enumerate}[leftmargin=1.75em, labelwidth=1.3em, align=center, itemsep=\parskip]
\item[\bf 1.] Recall that the Teichmüller space $\mathcal{T}(S)$, of a closed genus $g$ surface $S$, can be realize as a domain of holomorphy in $\C^{3g-3}$ by Bers' embedding. Nehari-Kraus' theorem implies that such domain is bounded; hence it carries a Bergman metric $b_\Omega$ which turns out K\"ahler complete on $\mathcal{T}(S)$.
\item[\bf 2.] In the case of $\C^n$, the Hilbert space $L^{2,h}(\C^n)$ is trivial. Indeed, the only square integrable holomorphic function on $\C^n$ is the zero function. As a consequence, the complex space $\C^n$ does not carry any Bergman metric nor Bergman pseudometric since the Bergman kernel $k(z,z)$ is trivially the zero function.
\end{enumerate}

\noindent Let $\Omega_1$ and $\Omega_2$ be complex domains inside $\C^n$ and $\C^m$ respectively. Their product $\Omega_1\times\Omega_2$ is a domain inside the complex space $\C^{n+m}$. The following product formulas for the Bergman metric is weIl known and the proof can be found, for instance, in \cite[Proposition 4.10.17]{SK2}.

\begin{prop}\label{pbm}
Let $\Omega_1$ and $\Omega_2$ be complex domains inside $\C^n$ and $\C^m$ respectively, then
\[ b_{\Omega_1\times\Omega_2}=b_{\Omega_1}\times b_{\Omega_2}.
\]
\end{prop}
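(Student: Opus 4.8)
The plan is to reduce the product formula for the Bergman \emph{metric} to the corresponding product formula for the Bergman \emph{kernel}, and then to differentiate. The crucial algebraic fact is that the Bergman kernel of a product domain factors as the product of the kernels of the factors, namely
\[ k_{\Omega_1\times\Omega_2}\big((z,w),(\zeta,\eta)\big)=k_{\Omega_1}(z,\zeta)\,k_{\Omega_2}(w,\eta). \]
Once this is in hand the rest is a short computation. Restricting to the diagonal gives $k_{\Omega_1\times\Omega_2}\big((z,w),(z,w)\big)=k_{\Omega_1}(z,z)\,k_{\Omega_2}(w,w)$, and since both factors are strictly positive (as assumed in the definition of $b_\Omega$) this product is again positive, so $\log k_{\Omega_1\times\Omega_2}$ is well-defined and
\[ \log k_{\Omega_1\times\Omega_2}\big((z,w),(z,w)\big)=\log k_{\Omega_1}(z,z)+\log k_{\Omega_2}(w,w). \]
The right-hand side is a sum of a function depending only on the $\C^n$-coordinates $z$ and a function depending only on the $\C^m$-coordinates $w$. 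Hence every mixed second derivative $\tfrac{\partial^2}{\partial z^i\partial\overline{w}^j}\log k_{\Omega_1\times\Omega_2}$ vanishes, so the complex Hessian defining $b_{\Omega_1\times\Omega_2}$ is block-diagonal, with the two diagonal blocks equal to the Hessians defining $b_{\Omega_1}$ and $b_{\Omega_2}$. This is precisely the assertion $b_{\Omega_1\times\Omega_2}=b_{\Omega_1}\times b_{\Omega_2}$.

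To establish the kernel product formula I would argue through orthonormal bases. Fix orthonormal bases $\{\phi_i\}$ of $L^{2,h}(\Omega_1)$ and $\{\psi_j\}$ of $L^{2,h}(\Omega_2)$. Recalling that with respect to any orthonormal basis the kernel is expressed as $k_\Omega(z,\zeta)=\sum_i\phi_i(z)\overline{\phi_i(\zeta)}$, it suffices to show that the functions $(z,w)\mapsto\phi_i(z)\psi_j(w)$ form an orthonormal basis of $L^{2,h}(\Omega_1\times\Omega_2)$; the product formula then follows by summing $\sum_{i,j}\phi_i(z)\psi_j(w)\overline{\phi_i(\zeta)\psi_j(\eta)}$ and factoring the double sum into a product of two single sums. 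Orthonormality is immediate from Fubini's theorem, since Lebesgue measure on $\Omega_1\times\Omega_2$ is the product measure and the $L^2$ inner product therefore splits into a product of integrals over $\Omega_1$ and $\Omega_2$.

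The heart of the matter, and the step I expect to be the main obstacle, is \emph{completeness} of this orthonormal system. I would prove it by showing that any $F\in L^{2,h}(\Omega_1\times\Omega_2)$ orthogonal to every product $\phi_i\psi_j$ must vanish. For each fixed index $i$ set
\[ G_i(w)=\int_{\Omega_1}F(z,w)\,\overline{\phi_i(z)}\,d\mu_1(z), \]
where $\mu_1$ denotes Lebesgue measure on $\Omega_1$. Using the sub-mean-value estimate \ref{ineq} together with dominated convergence one verifies that $G_i$ is holomorphic on $\Omega_2$ and lies in $L^{2,h}(\Omega_2)$. The hypothesis $\langle F,\phi_i\psi_j\rangle=0$ then reads $\langle G_i,\psi_j\rangle_{L^2(\Omega_2)}=0$ for all $j$, so completeness of $\{\psi_j\}$ forces $G_i\equiv0$. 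Consequently, for almost every $w$ the holomorphic slice $F(\cdot,w)$, which belongs to $L^2(\Omega_1)$ by Fubini, is orthogonal to every $\phi_i$ and hence vanishes in $L^2(\Omega_1)$; being holomorphic it vanishes identically, whence $F\equiv0$. Justifying the interchange of integration and differentiation in the definition of $G_i$ and the repeated applications of Fubini is where the analytic care is required, but the estimate \ref{ineq} furnishes exactly the local uniform control on $L^{2,h}$ functions needed to push these arguments through.
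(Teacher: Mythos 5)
Your proof is correct and is essentially the standard argument: the paper itself gives no proof of Proposition \ref{pbm}, deferring to \cite[Proposition 4.10.17]{SK2}, and the proof found there proceeds exactly as you propose — the Bergman kernel of the product factors via the orthonormal basis $\{\phi_i\psi_j\}$ of $L^{2,h}(\Omega_1\times\Omega_2)$ (with completeness, the genuine crux, established by precisely your Fubini/slicing argument, the sub-mean-value estimate \ref{ineq} supplying the local uniform control needed for the holomorphy of $G_i$ and the interchanges of limits), whence $\log k$ splits additively on the diagonal and the complex Hessian is block-diagonal. It is worth noting that your argument also covers the degenerate case in which one factor has trivial $L^{2,h}$ — exactly the situation $\Omega_2=\C^{3g-3}$ in which the paper applies the proposition in Theorem \ref{bnotcom} — since the slicing step then shows directly that $L^{2,h}(\Omega_1\times\Omega_2)=\{0\}$, so no Bergman pseudometric exists on the product.
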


\noindent The proposition says that if the Bergman metric on $\Omega_1\times \Omega_2$ exists, then it can be split as the direct product of the Bergman metrics on $\Omega_1$ and $\Omega_2$ respectively. In particular, if one (possibly both) of these metrics is not defined, then the Bergman metric on $\Omega_1\times \Omega_2$ does not exists.\\

\noindent Coming back to our moduli space $\mathcal{P}(S)$, if a Bergman metric (or pseudometric) exists on such space, by \ref{pbm} it induces a Bergman metric (pseudometric) on $\C^n$ which is actually not defined by the previous remark. Hence the moduli space $\mathcal{P}(S)$ does not carry a Bergman metric nor pseudometric.

\printbibliography
\end{document}